\documentclass[a4paper]{article}

\usepackage[utf8]{inputenc}
\usepackage[english]{babel}
\usepackage[T1]{fontenc}
\usepackage{amsfonts}
\usepackage{amsmath}
\usepackage{amsthm}
\usepackage{amssymb}
\usepackage{graphicx}
\usepackage{graphics}
\usepackage{a4wide}

\usepackage{mathptmx} 
\usepackage{bm}

\usepackage{nicefrac}    
\usepackage{microtype}   
\usepackage{booktabs}    

\let\originalleft\left
\let\originalright\right
\renewcommand{\left}{\mathopen{}\mathclose\bgroup\originalleft}
\renewcommand{\right}{\aftergroup\egroup\originalright}

\usepackage{pifont}
\newcommand{\cmark}{\ding{51}}%
\newcommand{\xmark}{\ding{55}}

\usepackage{comment}
\usepackage{todonotes}
\usepackage[normalem]{ulem}
\usepackage{etoolbox}
\usepackage{color}
\newtoggle{FinalVersion}
\toggletrue{FinalVersion}
\iftoggle{FinalVersion}{
 
 \newcommand{\deleted}[1]{{}}
}{
 
 \newcommand{\deleted}[1]{{\sout{\color{red}#1}}} 
}

\usepackage[section]{algorithm}
\usepackage{algorithmicx}
\usepackage{algpseudocode}

\algdef{SE}[DOWHILE]{Do}{doWhile}{\algorithmicdo}[1]{\algorithmicwhile\ #1}%

\newtheorem{thm}{Theorem}[section]
\newtheorem{theorem}[thm]{Theorem}
\newtheorem{lemma}[thm]{Lemma}
\newtheorem{proposition}[thm]{Proposition}

\theoremstyle{definition}

\newtheorem{assumption}[thm]{Assumption}

\newcommand{\norm}[1]{\|#1\|}
\newcommand{\nrm}[1]{|#1|}

\newcommand{\R}{{\mathbb R}}
\newcommand{\EE}{{\mathbb E}}

\newcommand{\bpm}{\begin{pmatrix}}
\newcommand{\epm}{\end{pmatrix}}

\newcommand{\mnmz}{\operatorname*{minimize}}
\newcommand{\mxmz}{\operatorname*{maximize}}
\newcommand{\st}{\operatorname{subject\ to}}

\newcommand{\eps}{{\varepsilon}}

\newcommand{\clip}{\operatorname{clip}}

\usepackage{tikz}
\usepackage{pgfplots}
\usepackage{pgfplotstable}

\newcommand{\Pcal}{{\mathcal P}}
\newcommand{\niter}{{n_{h}}}

\newcommand{\cmin}{{c_{\rm min}}}
\newcommand{\cmax}{{c_{\rm max}}}
\newcommand{\csmax}{{c_{\rm max2}}}

\usetikzlibrary{pgfplots.groupplots}
\pgfplotsset{compat=1.3}
\pgfplotsset{
    line1/.style={%
        blue},
    line2/.style={%
        blue, dashdotted},
    line3/.style={%
        black},
    line4/.style={%
        black, dashdotted},
    line5/.style={%
        red, solid},
    row1/.style={%
        xmin=0, xmax=17,
        ymin=70, ymax=100,
        xtick distance=4},
    row2/.style={%
        xmin=0, xmax=17,
        ymin=70, ymax=100,
        xtick distance=4},
    row3/.style={%
        xmin=0, xmax=19,
        ymin=70, ymax=100,
        xtick distance=4},
    legendStyleA/.style={%
        column sep = 10pt,
        legend to name = grouplegendA},
    legendStyleB/.style={%
        column sep = 10pt,
        legend to name = grouplegendB},
    legendStyleC/.style={%
        column sep = 10pt,
        legend to name = grouplegendC},
}

\pgfplotstableread[col sep = comma]{Results_Time.csv}\tabTime
\pgfplotstableread[col sep = comma]{Results_Iterations.csv}\tabIter
\pgfplotstableread[col sep = comma]{Results_h.csv}\tabH

\usepackage{authblk}

\title{Projections onto the canonical simplex with additional linear inequalities}
\date{November 6, 2019}

\author[1,2]{Luk\'a\v{s} Adam\thanks{adam@utia.cas.cz}}
\author[2]{V. M\'acha}

\affil[1]{Southern University of Science and Technology, Shenzhen 518055, China}
\affil[2]{\'UTIA, The Czech Academy of Sciences, Pod Vod\'arenskou v{\v ez}\'i 4, 18208, Prague, Czech Republic}

\begin{document}

\maketitle

\begin{abstract}
We consider the distributionally robust optimization and show that computing the distributional worst-case is equivalent to computing the projection onto the canonical simplex with additional linear inequality. We consider several distance functions to measure the distance of distributions. We write the projections as optimization problems and show that they are equivalent to finding a zero of real-valued functions. We prove that these functions possess nice properties such as monotonicity or convexity. We design optimization methods with guaranteed convergence and derive their theoretical complexity. We demonstrate that our methods have (almost) linear observed complexity.

\noindent\textbf{Keywords: }projection; simplex; distributionally robust optimization; linear obseved complexity.
\end{abstract}

\section{Introduction}

The projection of a vector onto the unit simplex appears in various fields such as portfolio optimization \cite{markowitz1952portfolio}, multi-phase physics \cite{bendsoe2013topology}, mathematical optimization \cite{nelder1965simplex}, knapsack problem \cite{kellerer2013knapsack} or machine learning applications \cite{blondel2014large}. Given a vector $\bm q\in\R^n$, this projection amounts to solving
\begin{equation}\label{eq:problem_simplex}
\aligned
\mnmz_{\bm p}\ &\frac12\norm{\bm p-\bm q}^2 \\
\st\ &\sum_{i=1}^np_i=1,\\
&0\le p_i,\quad\forall i=1,\dots,n.
\endaligned
\end{equation}
The Karush-Kuhn-Tucker optimality conditions imply that if one solves
\begin{equation}\label{eq:system0}
\sum_{i=1}^n\max\{q_i-\mu,0\} = 1
\end{equation}
for $\mu$, then one recovers the optimal solution of \eqref{eq:problem_simplex} by thresholding
\begin{equation}\label{eq:threshold0}
p_i=\max\{q_i-\mu,0\}.
\end{equation}
This was discovered for the first time in  \cite{held1974validation} and then rediscovered many times later \cite{maculan1989linear}. The simplest way to solve \eqref{eq:system0} is to sort $\bm q$, derive an iterative procedure computing the whole function on the left-hand side of \eqref{eq:system0} and then find when its value equals to $1$. Since the second part can be done in $O(n)$, the whole algorithm has complexity $O(n\log n)$ due to the sorting.

This procedure was improved in numerous papers. \cite{van2008probing} observed that only those $q_i$ above $\mu$ need to be sorted in \eqref{eq:system0}. Using a partially sorted structure called heap, they managed to reduce the complexity to $O(n+k\log n)$, where $k$ is the number of $q_i$ above the optimal $\mu$. \cite{kiwiel2008breakpoint} realized that many operations in quicksort may be ignored when it is used to solve \eqref{eq:system0} and reached complexity $O(n)$. \cite{michelot1986finite} proposed a simple method based on the fixed-point theorem with observed complexity $O(n)$. \cite{condat2016fast} provided an excellent overview, pointed to some errors in previous papers and designed an improved algorithm.

Besides the standard projection \eqref{eq:problem_simplex}, multiple versions appear in the literature. \cite{adam2018semismooth} considered an infinite-dimensional optimization problem with partial differential equations in the constraints. To get the independence of the number of iterations on the mesh size, they derived a path-following algorithm. \cite{liu2009efficient} considered sparse learning problems containing a modified simplex with two vectors of variables whose sum had to be equal. They derived an improved bisection method and a fast-converging subgradient algorithm. A similar simplex appeared in \cite{shalev2006efficient} for ranking labels based on feedback and in \cite{Li_TopPush} for a special binary classification problem. \cite{lapin.2015} considered SVM with top-${\rm k}$ error instead of the standard top-$1$ error. The resulting modified simplex contained a variable upper bound which was not fixed as in all previous cases. They penalized one constraint and computed an approximate projection. Note that these problems are difficult as observed in \cite{adam2016normally}. \cite{philpott2018distributionally} considered a maximization of a linear function on reduced simplex. Their application came from financial stochastic dual dynamic programming.

In this paper, we also consider projections onto a modified simplex. Our motivation stems from the field of distributionally robust optimization \cite{delage2010distributionally} where one hedges against uncertainty by estimating a distribution $\bm q$ and considering the worse outcome when the true distribution $\bm p$ is not far away from $\bm q$. Since $\bm p$ is a distribution and we need to keep close to $\bm q$, this may be equivalently written as a projection with additional linear inequality. The projection is taken with respect to the distance between distributions and the additional linear inequality comes from considering the worst case. There are several possibilities of the considered distance function. Probably the most commonly used are the $\phi$-divergence \cite{bayraksan2015data} and the Wasserstein distance \cite{gao2016distributionally}. The authors in \cite{kapteyn2018distributionally,philpott2018distributionally} proposed an algorithm with quadratic complexity for the $l_2$ norm distance. In \cite{rahimian2019identifying} the authors provided a closed-form algorithm for $l_1$ distance.

The resulting problem is convex and depending on the used distance function, it may be even quadratic or linear. This suggests using general-purpose solvers such as CPLEX which is guaranteed to converge. In our paper, we perform a comparison with CPLEX and IPOPT and show that our algorithm exhibit the observed (almost) linear complexity and outperform the above general-purpose solvers.

The paper is organized as follows. In Section \ref{sec:problem} we give a brief introduction into the field of distributionally robust optimization. We derive the problems of interest and focus on measuring the distance by various $\phi$-divergences and $l_p$ norms. In Section \ref{sec:results} we present the main results. Instead of penalizing one constraint as in \cite{lapin.2015}, we write the full KKT system and simplify it into one equation in one variable similar to \eqref{eq:system0}. We derive the thresholding operator similar to \eqref{eq:threshold0}. For readability, we postpone all proofs to the Appendix. In Section \ref{sec:bounds} we consider numerical properties and show that the derived equations have nice properties such as monotonicity or convexity. Finally, in Section \ref{sec:numerics} we focus on numerical results. All codes are available online.\footnote{\texttt{https://github.com/VaclavMacha/Projections}\label{footnote:codes}}

\section{Motivation from distributionally robust optimization}\label{sec:problem}

In this section, we provide motivation for the problems from the field of distributionally robust optimization. In the classical robust optimization, one minimizes a random function $f(\bm x,\bm\xi)$ with respect to a decision variable $\bm x$ while considering the worse possible random outcome of a random variable $\bm \xi$ which is bound to lie in $\Xi$. This leads to the problem
\begin{equation}\label{eq:robust1}
\mnmz_{\bm x}\quad \mxmz_{\bm \xi\in\Xi} f(\bm x,\bm \xi).
\end{equation}
Since \eqref{eq:robust1} considers the worst possible scenario, this approach is usually too conservative. One way to alleviate it, is to consider the distributionally robust optimization, where one takes the worst outcome with respect to all probability distributions and not to all scenarios. Denoting the probability distribution by $P$, expectation with respect to $P$ by $\EE_P$ and the set of all admissible probability distributions by $\Pcal$, this results in
\begin{equation}\label{eq:robust2}
\mnmz_{\bm x}\quad \mxmz_{P\in\Pcal} \EE_Pf(\bm x,\bm \xi).
\end{equation}
Note that if $\Pcal$ consists of all Dirac measures concentrated at $\Xi$, then \eqref{eq:robust1} and \eqref{eq:robust2} coincide.

The simplest case appears when we know possible realization $\bm \xi_i$ for the random variable and each may happen with the probability $p_i$. Then the inner maximization problem in \eqref{eq:robust2} reduces to
\begin{equation}\label{eq:robust3}
\mxmz_{\bm p\in\Pcal} \EE_Pf(\bm x,\bm \xi) = \mxmz_{\bm p\in\Pcal} \sum_{i=1}^np_if(\bm x,\bm \xi_i) = \mxmz_{\bm p\in\Pcal} \bm c^\top \bm p,
\end{equation}
where we set $c_i:= f(\bm x,\bm \xi_i)$. However, the probability distribution $\bm p$ is often not known. Then we may assume that $\bm p$ is close to some known estimate $\bm q$ and we may want to hedge against the worst possible small deviation from $\bm q$. Measuring this deviation bu $\hat d$, the inner problem in \eqref{eq:robust2} then takes form
\begin{equation}\label{eq:problem0}
\aligned
\mxmz_{\bm p}\ &\bm c^\top \bm p \\
\st\ &\sum_{i=1}^np_i=1,\\
&0\le p_i,\quad\forall i=1,\dots,n,\\
&\hat d(\bm p, \bm q)\le\eps,
\endaligned
\end{equation}
The first two constraints prescribe that $\bm p$ is a probability distribution while the last one determines that $\bm p$ is not far from $\bm q$.

\subsection{Connection to projection onto the canonical simplex}\label{sec:convex}

Since convex programming allows to switch the objective and constraints, for each $\eps$ there is some $\delta$ such that problem \eqref{eq:problem0} is equivalent to
\begin{equation}\label{eq:problem0_equiv}
\aligned
\mnmz_{\bm p}\ &\hat d(\bm p, \bm q)& \\
\st\ &\sum_{i=1}^np_i=1,\\
&0\le p_i,\quad\forall i=1,\dots,n,\\
&\bm c^\top \bm p\ge\delta.
\endaligned
\end{equation}
This implies that problem \eqref{eq:problem0} is equivalent to a projection (with respect to distance $\hat d$) onto the canonical simplex restricted by additional linear constraint. In Section \ref{sec:simplex} we will consider a similar problem where the canonical simplex is restricted by upper bounds.

\subsection{Notation and assumptions}

In the manuscript, we employ the following notation and assumption:
$$
\aligned
\cmax &:= \max_{i=1,\dots,n} c_i, \\
\cmin &:= \min_{i=1,\dots,n} c_i, \\
I &:= \{i\mid c_i=\cmax\}.
\endaligned
$$
\begin{assumption}\label{ass}
We consider the following assumptions:
\begin{enumerate}\itemsep 0pt
\item[(A1)] Vector $\bm q$ has positive components which sum to one.
\item[(A2)] Vector $\bm c$ is not a constant vector.
\end{enumerate}
\end{assumption}

\noindent Assumption (A1) is natural since we want to measure the distance of $\bm p$ from $\bm q$ which is a distribution. Assumption (A2) is technical only. If $\bm c$ is a constant vector, then the objective $\bm c^\top \bm p=\cmax$ is constant and the optimization is trivial.

\section{Reduction of projections onto modified simplex to  one equation}\label{sec:results}

In the previous section, we mentioned how a projection onto the unit simplex with an additional constraint arises in the field of distributionally robust optimization. In the introduction, we recalled a way of solving the projection onto the unit simplex \eqref{eq:system0}. Namely, one needs to solve the equation \eqref{eq:system0} and then apply the thresholding operator \eqref{eq:threshold0} to obtain the solution.

In this section, we follow a similar approach to solve problem \eqref{eq:problem0} with two types of the distance function $\hat d$. Moreover, we use the same technique to derive an algorithm for projection onto the canonical simplex with additional upper bounds. These results allow us to propose numerical methods with linear complexity.


\subsection{Distributionally robust optimization with $\phi$-divergences}

In this section we consider the distributionally robust optimization where the distance function $\hat d$ is a $\phi$-divergence. In this case
\begin{equation}\label{eq:phi_div}
\hat d(\bm p,\bm q) = \sum_{i=1}^n d(p_i,q_i) = \sum_{i=1}^n q_i\phi\left(\frac{p_i}{q_i}\right),
\end{equation}
where $d$ is a $\phi$-divergence and $\phi$ is the convex generating function with $\phi(1)=0$. Then the distributionally robust problem \eqref{eq:problem0} takes form
\begin{equation}\label{eq:problem1}\tag{DRO1}
\aligned
\mxmz_{\bm p}\ &\bm c^\top \bm p \\
\st\ &\sum_{i=1}^np_i=1,\\
&0\le p_i,\quad\forall i=1,\dots,n,\\
&\sum_{i=1}^n q_i\phi\left(\frac{p_i}{q_i}\right) \le\eps,
\endaligned
\end{equation}
Some examples of $\phi$-divergences are listed in Table \ref{table:divergences}. We also consider the variation distance ($l_1$ norm). Even though it is a $\phi$-divergence, we handle it in Section \ref{sec:norms} as it is a norm as well. We start with the following result which states that if $\eps$ is large enough, then the solution to \eqref{eq:problem1} is trivial.

\begin{table}[!ht]
\centering
\caption{Examples of the $\phi$-divergences $d$ and generating functions $\phi$. Note that $\phi$ is convex with $\phi(1)=0$.}
\label{table:divergences}
\begin{tabular}{@{}lll@{}}\toprule
Name & Generating function $\phi$ & Formula $d$ \\\midrule
Kullback-Leibler divergence & $\phi_1(t) = t\log t$ & $d_1(p,q) = p\log\left(\frac p q\right)$ \\
Burg entropy & $\phi_2(t) = -\log t$ & $d_2(p,q) = q\log\left(\frac q p\right)$ \\
Hellinger distance & $\phi_3(t) = (\sqrt t - 1)^2$ & $d_3(p,q) = \left(\sqrt p - \sqrt q\right)^2$ \\
$\chi^2$-distance & $\phi_4(t) = \frac1t (t - 1)^2$ & $d_4(p,q) = \frac{(p-q)^2}{p}$ \\
Modified $\chi^2$-distance & $\phi_5(t) = (t - 1)^2$ & $d_5(p,q) = \frac{(p-q)^2}{q}$ \\
\bottomrule
\end{tabular}
\end{table}

\begin{theorem}\label{thm0}
Let Assumption \ref{ass} hold true and define vector $\hat{\bm p}$ with components
\begin{equation}\label{eq:check1}
\hat p_i = \begin{cases} \frac{1}{\sum_{j\in I}q_j}q_i &\text{if }i\in I, \\ 0 &\text{otherwise.} \end{cases}
\end{equation}
If this solution satisfies
\begin{equation}\label{eq:check2}
\sum_{i=1}^n q_i\phi\left(\frac{\hat p_i}{q_i}\right) \le\eps
\end{equation}
then $\hat{\bm p}$ is the optimal solution of \eqref{eq:problem1}.
\end{theorem}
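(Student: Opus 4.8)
The plan is to exploit the fact that dropping the divergence constraint in \eqref{eq:problem1} leaves a linear objective maximized over the canonical simplex, whose optimal value is trivial to read off, and then to show that $\hat{\bm p}$ already attains this value while remaining feasible for the full problem. The whole argument is a sandwich between a universal upper bound and the value attained at $\hat{\bm p}$.

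First I would establish the upper bound on the objective. For \emph{any} $\bm p$ feasible for \eqref{eq:problem1} (in particular any probability vector), the estimate $c_i \le \cmax$ together with $p_i \ge 0$ and $\sum_{i=1}^n p_i = 1$ gives $\bm c^\top \bm p = \sum_{i=1}^n c_i p_i \le \cmax \sum_{i=1}^n p_i = \cmax$. Hence $\cmax$ bounds the optimal value from above, and it is attained precisely by those probability vectors supported in $I = \{i \mid c_i = \cmax\}$. Next I would verify that $\hat{\bm p}$ is both feasible and attains this bound. Feasibility splits into three checks: nonnegativity holds since $q_i > 0$ by (A1); the normalization $\sum_{i=1}^n \hat p_i = \frac{1}{\sum_{j\in I} q_j}\sum_{i\in I} q_i = 1$ is immediate from \eqref{eq:check1}; and the divergence bound $\sum_{i=1}^n q_i\phi(\hat p_i/q_i) \le \eps$ is exactly the standing hypothesis \eqref{eq:check2}. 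Since $\hat{\bm p}$ is supported on $I$, a direct computation gives $\bm c^\top \hat{\bm p} = \cmax \sum_{i\in I}\hat p_i = \cmax$. Combining the two steps, $\hat{\bm p}$ is feasible and its objective value equals the maximal achievable value $\cmax$, so $\hat{\bm p}$ solves \eqref{eq:problem1}.

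There is no substantive technical obstacle; the points deserving care are conceptual. One should note that the particular form of $\hat{\bm p}$ --- proportional to $\bm q$ on $I$ --- is not arbitrary: among all probability vectors supported on $I$, it is the one minimizing $\sum_{i=1}^n q_i\phi(\hat p_i/q_i)$, since by convexity of $\phi$ and Jensen's inequality the divergence is minimized when the ratio $\hat p_i/q_i$ is constant on $I$. This explains \emph{why} \eqref{eq:check2} is the weakest condition under which some maximizer of the linear objective is divergence-feasible, even though the theorem as stated only needs the one-directional implication. Finally, for generating functions with $\phi(0)=+\infty$ (such as the Burg entropy), condition \eqref{eq:check2} simply fails whenever $I\ne\{1,\dots,n\}$, which is guaranteed by (A2); in that regime the hypothesis is never met and the theorem is vacuously true, so no separate treatment is required.
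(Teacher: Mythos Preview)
Your proof is correct and takes a genuinely different, more elementary route than the paper. The paper establishes Theorem~\ref{thm0} as a byproduct of writing the full KKT system for \eqref{eq:problem1}: it analyzes the case $\mu=0$ of the multiplier on the divergence constraint, deduces from the stationarity condition $\lambda=\alpha_i+c_i$ that any KKT point with $\mu=0$ must be supported on $I$, and then argues (via the constant-ratio property of $\phi$-divergences) that $\hat{\bm p}$ solves the auxiliary problem \eqref{eq:check0} of minimizing the divergence over probability vectors supported on $I$. Your argument bypasses KKT entirely: you simply observe that the linear objective is bounded above by $\cmax$ on the simplex, and that $\hat{\bm p}$ is a feasible point attaining this bound. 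This is shorter and requires no multiplier machinery. The paper's approach has the advantage of fitting into a unified KKT framework used for Theorems~\ref{thm1}--\ref{thm5} and of explicitly establishing that $\hat{\bm p}$ minimizes the divergence among maximizers (which you correctly note is the reason \eqref{eq:check2} is the \emph{sharpest} sufficient condition), but for the theorem as stated your sandwich argument is complete on its own.
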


Theorem \ref{thm0} provides the optimal solution for the case of large $\eps$. In the opposite case, we provide the solution in the following list of theorems. Each of them handles one $\phi$-divergence from Table \ref{table:divergences}.

\begin{theorem}[Kullback-Leibler divergence]\label{thm1}
Assume that Assumption \ref{ass} holds true, that \eqref{eq:check2} is violated and that $\eps <- \log\left(\sum_{i\in I}q_i\right)$. Then there exists some $\mu\in (0,\frac{\cmax-\cmin}{\eps}]$ which solves
\begin{equation}\label{eq:thm1_1}
h_1(\mu) := \sum_{i=1}^nq_i \exp\left(\frac{c_i}{\mu}\right)\left(\frac{c_i}{\mu} - \log\left(\sum_{j=1}^nq_j\exp\left(\frac{c_j}{\mu}\right)\right) - \eps\right) = 0.
\end{equation}
Moreover, defining the non-normalized weights
\begin{equation}\label{eq:thm1_2}
\hat p_i = q_i \exp\left(\frac{c_i}{\mu}\right),
\end{equation}
the optimal solution of \eqref{eq:problem1} with the Kullback-Leibler divergence $\phi=\phi_1$ equals to $p_i=\frac{\hat p_i}{\sum \hat p_j}$. 
\end{theorem}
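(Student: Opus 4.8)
My plan is to treat \eqref{eq:problem1} with $\phi=\phi_1$ as a convex program and to read both the Gibbs form \eqref{eq:thm1_2} and the equation $h_1(\mu)=0$ directly off its Karush--Kuhn--Tucker (KKT) system. The objective $-\bm c^\top\bm p$ is linear and the map $\bm p\mapsto\sum_i p_i\log(p_i/q_i)-\eps$ is convex (each $p_i\log(p_i/q_i)$ is convex and $q_i>0$ by (A1)), the remaining constraints being affine. Taking $\bm p=\bm q$ gives divergence $0<\eps$ together with $q_i>0$ and $\sum_i q_i=1$, so Slater's condition holds and the KKT conditions are necessary and sufficient for optimality; existence of a maximizer follows from compactness of the feasible set (a closed subset of the simplex, the divergence being continuous there under $0\log 0=0$). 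Writing $\mu\ge0$ for the multiplier of the divergence constraint, $\lambda$ for the equality and $\nu_i\ge0$ for $p_i\ge0$, stationarity reads $-c_i+\mu(\log(p_i/q_i)+1)+\lambda-\nu_i=0$ for every $i$ (throughout I take $\eps>0$, the problem being infeasible or trivial otherwise).

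The next step is to show $\mu>0$. If $\mu=0$ then $\nu_i=\lambda-c_i$, so $\nu_i\ge0$ forces $\lambda\ge\cmax$ while complementary slackness $\nu_i p_i=0$ forces $p_i=0$ whenever $c_i<\cmax$; hence any such $\bm p$ is supported on $I$. But among all distributions supported on $I$ the divergence $\sum_i p_i\log(p_i/q_i)$ equals $-\log\big(\sum_{i\in I}q_i\big)$ plus a nonnegative Kullback--Leibler term, so it is minimized by $\hat{\bm p}$ of \eqref{eq:check1} with value $-\log\big(\sum_{i\in I}q_i\big)$. Since by hypothesis $\eps<-\log(\sum_{i\in I}q_i)$ (equivalently, \eqref{eq:check2} is violated), every distribution supported on $I$ is infeasible, a contradiction. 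Thus $\mu>0$, complementary slackness makes the divergence constraint active, and (as $q_i>0$) solving stationarity gives $p_i\propto q_i\exp(c_i/\mu)$; normalising by $Z(\mu):=\sum_j q_j\exp(c_j/\mu)$ yields exactly \eqref{eq:thm1_2} and $p_i=\hat p_i/\sum_j\hat p_j$, with all $\nu_i=0$.

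It remains to convert the active-constraint condition into $h_1(\mu)=0$ and to localise the root. Substituting $\log(p_i/q_i)=c_i/\mu-\log Z(\mu)$ into $\sum_i p_i\log(p_i/q_i)=\eps$ and clearing the factor $Z(\mu)>0$ gives precisely $h_1(\mu)=Z(\mu)\big(g(\mu)-\eps\big)$, where $g(\mu):=\sum_i p_i\log(p_i/q_i)$ is the divergence attained by the Gibbs law; so a root of $h_1$ is exactly a $\mu$ for which the constraint is active. I would then apply the intermediate value theorem to the continuous function $g$. As $\mu\to0^+$ the Gibbs law concentrates on $I$ and $g(\mu)\to-\log(\sum_{i\in I}q_i)>\eps$, so $g$ exceeds $\eps$ for small $\mu$. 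For the right endpoint I would use Jensen's inequality $\log Z(\mu)\ge\sum_j q_j(c_j/\mu)\ge\cmin/\mu$, giving $g(\mu)=\tfrac1\mu\sum_i p_ic_i-\log Z(\mu)\le\tfrac1\mu(\cmax-\cmin)$; at $\mu=\tfrac{\cmax-\cmin}{\eps}$ (positive and finite by (A2)) this yields $g\le\eps$. The sign change then produces a root in $(0,\tfrac{\cmax-\cmin}{\eps}]$.

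Finally, at such a root the Gibbs distribution together with $(\mu,\lambda,\bm\nu=\bm0)$ satisfies every KKT condition: stationarity and normalisation by construction, primal feasibility because $p_i>0$, $\sum_i p_i=1$ and the divergence equals $\eps$, dual feasibility because $\mu>0$ and $\nu_i=0$, and complementary slackness because the divergence constraint is active and $\nu_i p_i=0$. By sufficiency of KKT for this convex program it is therefore the optimal solution of \eqref{eq:problem1}. I expect the one genuinely delicate step to be the localisation in the prescribed interval: bare existence of a root is immediate, but pinning it to $(0,\tfrac{\cmax-\cmin}{\eps}]$ needs the Jensen estimate above together with the careful $\mu\to0^+$ limit identifying $g(0^+)=-\log(\sum_{i\in I}q_i)$, which is also exactly what reconciles the two (in fact equivalent) hypotheses of the theorem.
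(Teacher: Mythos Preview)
Your proposal is correct and follows essentially the same route as the paper: write the KKT system, rule out $\mu=0$, read off the Gibbs form $p_i\propto q_i\exp(c_i/\mu)$, and localise a root of $h_1$ by the intermediate value theorem using the same two endpoint estimates (the lower bound $\log Z(\mu)\ge\cmin/\mu$ for the right endpoint and the behaviour as $\mu\downarrow0$ for the left). The only notable difference is cosmetic: you factor $h_1(\mu)=Z(\mu)\big(g(\mu)-\eps\big)$ and argue at the level of the attained divergence $g(\mu)$, obtaining the left endpoint via the Gibbs-concentration limit $g(0^+)=-\log\sum_{i\in I}q_i$, whereas the paper works directly with $h_1$ and proves $h_1(\mu)\to+\infty$ by an explicit asymptotic estimate; both yield the needed sign change.
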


\begin{theorem}[Burg entropy]\label{thm2}
Assume that Assumption \ref{ass} holds true and that \eqref{eq:check2} is violated. Then there exists some $\lambda \in (\cmax, \cmax+\frac{\cmax-\cmin}{\eps}]$ which solves
\begin{equation}\label{eq:thm2_1}
h_2(\lambda) := \sum_{i=1}^n q_i\log\left(\lambda-c_i\right) + \log\left(\sum_{i=1}^n\frac{q_i}{\lambda-c_i}\right) - \eps = 0.
\end{equation}
Moreover, defining the non-normalized weights
\begin{equation}\label{eq:thm2_2}
\hat p_i = q_i\frac{1}{\lambda-c_i},
\end{equation}
the optimal solution of \eqref{eq:problem1} with the Burg entropy distance $\phi=\phi_2$ equals to $p_i=\frac{\hat p_i}{\sum \hat p_j}$.
\end{theorem}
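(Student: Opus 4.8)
The plan is to solve \eqref{eq:problem1} with $\phi=\phi_2$ through its Karush--Kuhn--Tucker (KKT) system, in direct analogy with the reduction \eqref{eq:system0}--\eqref{eq:threshold0} for the plain simplex. Since the objective $\bm c^\top\bm p$ is linear and the feasible set is convex (the map $\bm p\mapsto\sum_i q_i\phi_2(p_i/q_i)$ is convex because $\phi_2$ is convex and $q_i>0$ by (A1)), the problem is a convex program, so any point satisfying the KKT conditions is automatically a global maximizer. Hence it suffices to construct one such point and check it has the claimed form. The crucial structural observation is that $\phi_2(t)=-\log t\to+\infty$ as $t\to0^+$, so every point of finite divergence — in particular every feasible point, as $\eps<\infty$ — has all components strictly positive. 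Therefore the nonnegativity constraints are never active, their multipliers vanish, and the KKT system collapses to stationarity together with the normalization and the divergence constraint.

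First I would form the Lagrangian of the equivalent minimization of $-\bm c^\top\bm p$, with multiplier $\lambda$ for $\sum_i p_i=1$ and $\mu\ge0$ for the divergence constraint. Using $\phi_2'(t)=-1/t$, stationarity $\partial_{p_i}L=0$ reads $-c_i+\lambda-\mu q_i/p_i=0$, i.e. $p_i=\mu q_i/(\lambda-c_i)$. Positivity of every $p_i$ forces $\lambda>c_i$ for all $i$, hence $\lambda>\cmax$; and $\mu=0$ would give $c_i\equiv\lambda$, contradicting (A2), so $\mu>0$ and the divergence constraint is active. Writing $\hat p_i=q_i/(\lambda-c_i)$ we have $p_i=\mu\hat p_i$, and imposing $\sum_i p_i=1$ fixes $\mu=1/S(\lambda)$ with $S(\lambda):=\sum_i q_i/(\lambda-c_i)>0$, which already yields the normalization $p_i=\hat p_i/\sum_j\hat p_j$ from the statement. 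Substituting $q_i/p_i=(\lambda-c_i)/\mu$ into the active constraint $\sum_i q_i\log(q_i/p_i)=\eps$, using $\sum_i q_i=1$ from (A1) and $\log\mu=-\log S(\lambda)$, collapses it into precisely $h_2(\lambda)=0$. Conversely, any root $\lambda>\cmax$ of $h_2$ produces via these formulas a primal-feasible $\bm p$ with $\mu>0$ satisfying every KKT condition, hence optimal; so the whole theorem reduces to locating a root of $h_2$ in the prescribed interval.

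For existence in $(\cmax,\cmax+\frac{\cmax-\cmin}{\eps}]$ I would apply the intermediate value theorem to the continuous function $h_2$. Writing $t=\lambda-\cmax$ and splitting indices by $I=\{i:c_i=\cmax\}$, as $t\to0^+$ the dominant contributions are $(\sum_{i\in I}q_i)\log t$ from the first sum and $-\log t$ from $\log S(\lambda)$, so $h_2(\lambda)\sim\big(\sum_{i\in I}q_i-1\big)\log t$; since (A2) forces $I\neq\{1,\dots,n\}$ and (A1) gives $\sum_i q_i=1$, the coefficient is strictly negative and $h_2(\lambda)\to+\infty$. At the right endpoint $\lambda^\star:=\cmax+\frac{\cmax-\cmin}{\eps}$, I would bound the two logarithms separately: with $a_i:=\lambda^\star-c_i$ one has $\sum_i q_i\log a_i\le\log(\max_i a_i)$ and $\log\sum_i q_i a_i^{-1}\le-\log(\min_i a_i)$, so that $h_2(\lambda^\star)\le\log\frac{\max_i a_i}{\min_i a_i}-\eps$. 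The endpoint is calibrated exactly so that $\max_i a_i/\min_i a_i=(\lambda^\star-\cmin)/(\lambda^\star-\cmax)=1+\eps$, giving $h_2(\lambda^\star)\le\log(1+\eps)-\eps\le0$. Combined with $h_2\to+\infty$ near $\cmax$, this yields a root $\lambda\in(\cmax,\lambda^\star]$. I expect this endpoint estimate — spotting that the prescribed $\lambda^\star$ makes the crude ratio bound equal $1+\eps$ and then closing with $\log(1+\eps)\le\eps$ — to be the one genuinely delicate step; the boundary limit at $\cmax^+$ and the algebraic reduction to $h_2$ are routine once interiority from $\phi_2(0^+)=+\infty$ is established.
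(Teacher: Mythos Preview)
Your proof is correct and follows essentially the same route as the paper: write the KKT system, use $\phi_2(0^+)=+\infty$ to force $p_i>0$ and hence $\alpha_i=0$, solve stationarity for $p_i=\mu q_i/(\lambda-c_i)$, eliminate $\mu$ via the normalization to arrive at $h_2(\lambda)=0$, and then bracket a root by showing $h_2\to+\infty$ as $\lambda\downarrow\cmax$ and $h_2(\lambda^\star)\le\log(1+\eps)-\eps\le0$. The only cosmetic difference is that the paper rules out $\mu=0$ through the general Case~1/Case~2 split (invoking the violation of \eqref{eq:check2}), whereas you observe directly that $\mu=0$ with $\alpha_i=0$ would force $c_i\equiv\lambda$, contradicting (A2); both arguments are valid and lead to the same place.
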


\begin{theorem}[Hellinger distance]\label{thm3}
Assume that Assumption \ref{ass} holds true, that \eqref{eq:check2} is violated and that $\eps<2-2\sqrt{\sum_{i\in I}q_i}$. Then there exists some $\lambda \in (\cmax, \cmax+\frac{(2-\eps)(\cmax-\cmin)}{\eps}]$ which solves
\begin{equation}\label{eq:thm3_1}
h_3(\lambda) := 2\sum_{i=1}^n \frac{q_i}{\lambda-c_i} - (2-\eps)\sqrt{\sum_{i=1}^n \frac{q_i}{(\lambda-c_i)^2}} = 0.
\end{equation}
Moreover, defining the non-normalized weights
\begin{equation}\label{eq:thm3_2}
\hat p_i = q_i \frac{1}{(\lambda-c_i)^2},
\end{equation}
the optimal solution of \eqref{eq:problem1} with the Hellinger distance $\phi=\phi_3$ equals to $p_i=\frac{\hat p_i}{\sum \hat p_j}$.
\end{theorem}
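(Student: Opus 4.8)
The plan is to attack \eqref{eq:problem1} through its Karush--Kuhn--Tucker (KKT) system, exactly in the spirit of \eqref{eq:system0}. First I would rewrite the maximization as the minimization of $-\bm c^\top\bm p$; with $\phi_3(t)=(\sqrt t-1)^2$ the divergence is $\hat d(\bm p,\bm q)=\sum_i(\sqrt{p_i}-\sqrt{q_i})^2$, which is convex in $\bm p$ since each summand $p_i-2\sqrt{q_i}\sqrt{p_i}+q_i$ is convex. Hence \eqref{eq:problem1} is a convex program, and because $\bm p=\bm q$ is strictly feasible (by (A1) and $\eps>0$) Slater's condition holds, so the KKT conditions are necessary and sufficient. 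Writing $\lambda_0$ for the multiplier of $\sum_i p_i=1$, $\nu_i\ge0$ for $-p_i\le0$, and $\mu\ge0$ for the divergence constraint, stationarity in $p_i$ reads
\[
-c_i+\lambda_0-\nu_i+\mu\left(1-\sqrt{q_i/p_i}\right)=0.
\]
Two structural facts drive the argument. Since $\phi_3'(0^+)=-\infty$, a coordinate with $p_i=0$ would force $\nu_i\to-\infty$, contradicting $\nu_i\ge0$; thus every optimal $p_i$ is strictly positive and all $\nu_i=0$. And the best objective value $\cmax$ is attained only by distributions supported on $I$, whose minimal Hellinger divergence from $\bm q$ equals $2-2\sqrt{\sum_{i\in I}q_i}$ (a Cauchy--Schwarz computation, which is exactly \eqref{eq:check2} evaluated at $\hat{\bm p}$); so the hypothesis that \eqref{eq:check2} is violated, i.e. $\eps<2-2\sqrt{\sum_{i\in I}q_i}$, guarantees that no such maximizer is feasible, forcing the divergence constraint to be active and $\mu>0$.

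With $\nu_i=0$ and $\mu>0$, solving stationarity for $p_i$ gives $\sqrt{q_i/p_i}=(\lambda_0+\mu-c_i)/\mu$, so on setting $\lambda:=\lambda_0+\mu$,
\[
p_i=q_i\frac{\mu^2}{(\lambda-c_i)^2}.
\]
Positivity of $\sqrt{q_i/p_i}$ requires $\lambda-c_i>0$ for all $i$, i.e. $\lambda>\cmax$, the left end of the claimed interval; after normalization the factor $\mu^2$ cancels and we recover precisely the weights \eqref{eq:thm3_2}. To pin down $\lambda$ I would impose the two scalar conditions still open: normalization $\sum_i p_i=1$ gives $\mu^2\sum_i q_i/(\lambda-c_i)^2=1$, while the active constraint $\sum_i(\sqrt{p_i}-\sqrt{q_i})^2=\eps$ expands, using $\sum_i q_i=1$ and the preceding identity, to $2-2\mu\sum_i q_i/(\lambda-c_i)=\eps$. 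Eliminating $\mu=\big(\sum_i q_i/(\lambda-c_i)^2\big)^{-1/2}$ between these two relations collapses them into the single equation $h_3(\lambda)=0$ of \eqref{eq:thm3_1}. This cancellation---the divergence telescoping to $2-2\mu\sum_i q_i/(\lambda-c_i)$---is what reduces the entire KKT system to one variable.

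The remaining and most delicate step is to locate a root of $h_3$ in $(\cmax,\cmax+\tfrac{(2-\eps)(\cmax-\cmin)}{\eps}]$ via the intermediate value theorem (here (A2) guarantees $\cmax>\cmin$, so the interval is nondegenerate). As $\lambda\downarrow\cmax$ the terms with $i\in I$ dominate and $h_3(\lambda)\sim\frac{\sqrt{\sum_{i\in I}q_i}}{\lambda-\cmax}\big(2\sqrt{\sum_{i\in I}q_i}-(2-\eps)\big)$, which tends to $-\infty$ precisely because $\eps<2-2\sqrt{\sum_{i\in I}q_i}$, so $h_3$ is negative near $\cmax$. At the right endpoint $\lambda^\star:=\cmax+\tfrac{(2-\eps)(\cmax-\cmin)}{\eps}$ I would show $h_3(\lambda^\star)\ge0$ from the two elementary bounds $\sum_i q_i/(\lambda-c_i)^2\le\frac{1}{\lambda-\cmax}\sum_i q_i/(\lambda-c_i)$ (using $\lambda-c_i\ge\lambda-\cmax$) and $\sum_i q_i/(\lambda-c_i)\ge\frac{1}{\lambda-\cmin}$ (using $\lambda-c_i\le\lambda-\cmin$ and $\sum_i q_i=1$). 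Together they reduce $h_3(\lambda)\ge0$ to $4\,\frac{\lambda-\cmax}{\lambda-\cmin}\ge(2-\eps)^2$, and since $\frac{\lambda^\star-\cmax}{\lambda^\star-\cmin}=\frac{2-\eps}{2}$ this is exactly $2(2-\eps)\ge(2-\eps)^2$, i.e. $\eps\ge0$, which holds. Continuity of $h_3$ on $(\cmax,\lambda^\star]$ then yields a zero in the interval, and normalizing \eqref{eq:thm3_2} at that $\lambda$ delivers the optimal $\bm p$. I expect this final sign-and-bound analysis---rather than the mechanical KKT manipulation---to be the crux, as it is where the precise constant $(2-\eps)(\cmax-\cmin)/\eps$ and both hypotheses on $\eps$ are consumed.
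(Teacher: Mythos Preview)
Your proposal is correct and follows essentially the same route as the paper: set up the KKT system, rule out $\mu=0$ via the violation of \eqref{eq:check2}, solve stationarity for $p_i$ in terms of $(\lambda,\mu)$, eliminate $\mu$ using the two feasibility equalities to obtain $h_3(\lambda)=0$, and then prove existence of a root by showing $h_3\to-\infty$ as $\lambda\downarrow\cmax$ and $h_3\ge0$ at the right endpoint. Two minor differences worth noting: (i) you argue $p_i>0$ a priori from $\phi_3'(0^+)=-\infty$, while the paper assumes $p_i>0$, constructs a KKT point, and removes the assumption a posteriori by sufficiency; (ii) at the right endpoint the paper bounds $\sum q_i/(\lambda-c_i)\ge 1/(\lambda-\cmin)$ and $\sum q_i/(\lambda-c_i)^2\le 1/(\lambda-\cmax)^2$ directly, whereas you chain $\sum q_i/(\lambda-c_i)^2\le\frac{1}{\lambda-\cmax}\sum q_i/(\lambda-c_i)$ with the first bound---both reductions land exactly at $\lambda^\star$. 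Your observation that, for the Hellinger case, the hypothesis ``\eqref{eq:check2} is violated'' and $\eps<2-2\sqrt{\sum_{i\in I}q_i}$ are in fact the same condition is correct and not made explicit in the paper.
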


\begin{theorem}[$\chi^2$-distance]\label{thm4}
Assume that Assumption \ref{ass} holds true and that \eqref{eq:check2} is violated. Then there exists some $\lambda > \cmax$ which solves
\begin{equation}\label{eq:thm4_1}
h_4(\lambda) := \left(\sum_{i=1}^nq_i\sqrt{\lambda-c_i}\right) \left(\sum_{i=1}^nq_i\frac{1}{\sqrt{\lambda-c_i}}\right) - 1 - \eps = 0.
\end{equation}
Moreover, defining the non-normalized weights
\begin{equation}\label{eq:thm4_2}
\hat p_i = q_i\frac{1}{\sqrt{\lambda-c_i}},
\end{equation}
the optimal solution of \eqref{eq:problem1} with the $\chi^2$-distance $\phi=\phi_4$ equals to $p_i=\frac{\hat p_i}{\sum \hat p_j}$.
\end{theorem}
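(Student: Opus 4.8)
The plan is to attack \eqref{eq:problem1} with $\phi=\phi_4$ through its Karush--Kuhn--Tucker (KKT) system, in the same spirit as the reduction \eqref{eq:system0}--\eqref{eq:threshold0} for the plain simplex, and then to prove existence of the root of $h_4$ by an intermediate-value argument. First I would observe that the problem is convex: the objective is linear and the feasible set is the intersection of the compact simplex with a closed sublevel set of the lower-semicontinuous divergence, hence compact and nonempty (it contains $\bm q$ by (A1)), so an optimal $\bm p^\star$ exists. Since $\bm q$ lies in the relative interior of the simplex and $\sum_i q_i\phi_4(1)=0<\eps$ in the nontrivial case $\eps>0$, Slater's condition holds and the KKT conditions are necessary and sufficient. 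A crucial simplification is that every feasible point has strictly positive entries: because $q_i>0$ by (A1), letting $p_i\to 0$ forces $d_4(p_i,q_i)=(p_i-q_i)^2/p_i\to+\infty$, which breaks the divergence bound. Thus the nonnegativity multipliers vanish and only the multiplier $\mu$ of $\sum_i p_i=1$ and the multiplier $\alpha\ge 0$ of the divergence constraint remain.

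Next I would write stationarity. From $d_4(p_i,q_i)=p_i-2q_i+q_i^2/p_i$ we get the derivative $1-q_i^2/p_i^2$, so at $\bm p^\star$ the condition is $c_i-\mu-\alpha\bigl(1-q_i^2/p_i^2\bigr)=0$ for each $i$. Writing $\lambda:=\mu+\alpha$ and solving yields $p_i^2=\alpha q_i^2/(\lambda-c_i)$, i.e. $p_i=q_i\sqrt{\alpha}/\sqrt{\lambda-c_i}$, which is precisely \eqref{eq:thm4_2} after normalization. To see $\alpha>0$ I would argue by contradiction: if $\alpha=0$ then stationarity forces $c_i=\mu$ for all $i$ (all entries being interior), contradicting (A2). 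This is also why no upper bound on $\eps$ is required here, in contrast to Theorems \ref{thm1} and \ref{thm3}: for the $\chi^2$-distance the concentrated candidate $\hat{\bm p}$ of Theorem \ref{thm0} always has infinite divergence, so \eqref{eq:check2} is automatically violated. With $\alpha>0$ the relation $\alpha q_i^2/p_i^2=\lambda-c_i$ forces $\lambda-c_i>0$ for every $i$, hence $\lambda>\cmax$. Finally I would impose the two active constraints: $\sum_i p_i=1$ gives $\sqrt{\alpha}=\bigl(\sum_j q_j/\sqrt{\lambda-c_j}\bigr)^{-1}$, and, using $\sum_i q_i=1$ from (A1), the active divergence constraint reduces to $\sum_i q_i^2/p_i=1+\eps$; substituting $q_i^2/p_i=q_i\sqrt{\lambda-c_i}/\sqrt{\alpha}$ and eliminating $\sqrt{\alpha}$ produces exactly $h_4(\lambda)=0$.

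It then remains to show such a $\lambda$ exists, which I would obtain from the intermediate value theorem applied to the continuous $h_4$ on $(\cmax,\infty)$. With $A(\lambda):=\sum_i q_i\sqrt{\lambda-c_i}$ and $B(\lambda):=\sum_i q_i/\sqrt{\lambda-c_i}$, I would compute the endpoint limits. As $\lambda\to\cmax^+$ the terms with $c_i=\cmax$ send $B(\lambda)\to+\infty$, while $A(\lambda)\to\sum_{i\notin I}q_i\sqrt{\cmax-c_i}>0$, positivity coming from (A2) together with $q_i>0$; hence $h_4(\lambda)\to+\infty$. As $\lambda\to\infty$ I would use $A(\lambda)B(\lambda)=\sum_{i,j}q_iq_j\sqrt{(\lambda-c_i)/(\lambda-c_j)}\to\sum_{i,j}q_iq_j=1$, so $h_4(\lambda)\to-\eps<0$. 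Continuity yields a root $\lambda>\cmax$, and since the reconstructed $\bm p^\star$ satisfies the sufficient KKT system it is optimal for \eqref{eq:problem1}. The step deserving the most care, and the real obstacle, is the justification that all components stay strictly positive and that the divergence constraint is active ($\alpha>0$): these are exactly what collapse the full KKT system to the single scalar equation and fix the admissible range $\lambda>\cmax$. The limit bookkeeping in the intermediate-value step, though it uses both (A1) and (A2), is then routine.
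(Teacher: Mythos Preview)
Your proposal is correct and follows essentially the same KKT-reduction route as the paper: exploit that $\phi_4$ forces $p_i>0$ so the sign multipliers vanish, write stationarity, absorb the constant into $\lambda$, and eliminate the remaining multiplier between the two feasibility equations to obtain $h_4(\lambda)=0$. The paper's proof stops once the scalar equation and the formula for $p_i$ are derived and relies implicitly on solvability of the KKT system for existence of the root; your explicit intermediate-value argument (computing $\lim_{\lambda\downarrow\cmax}h_4=+\infty$ and $\lim_{\lambda\to\infty}h_4=-\eps$) and your direct contradiction for activity of the divergence constraint are clean additions rather than a different strategy.
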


\begin{theorem}[Modified $\chi^2$-distance]\label{thm5}
Assume that Assumption \ref{ass} holds true and that \eqref{eq:check2} is violated. Then there exists some $\lambda > -\cmax$ which solves
\begin{equation}\label{eq:thm5_1}
h_5(\lambda) := \sum_{i=1}^nq_i {\max}^2\left(c_i+\lambda,0\right) - (1+\eps)\left(\sum_{i=1}^nq_i \max\left(c_i+\lambda,0\right)\right)^2 = 0.
\end{equation}
Moreover, defining the non-normalized weights
\begin{equation}\label{eq:thm5_2}
\hat p_i = q_i \max\left(c_i+\lambda,0\right),
\end{equation}
the optimal solution of \eqref{eq:problem1} with the modified $\chi^2$-distance $\phi=\phi_5$ equals to $p_i=\frac{\hat p_i}{\sum \hat p_j}$.
\end{theorem}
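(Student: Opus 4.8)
The plan is to treat \eqref{eq:problem1} as a convex program and write down its Karush--Kuhn--Tucker (KKT) system, exactly in the spirit of the reduction \eqref{eq:system0}--\eqref{eq:threshold0} for the plain simplex. First I would observe that maximizing the linear objective $\bm c^\top\bm p$ over the feasible set (the simplex intersected with the sublevel set of the convex divergence $\sum_i(p_i-q_i)^2/q_i$) is a convex problem, and that Slater's condition holds: by (A1) the point $\bm p=\bm q$ has strictly positive entries, sums to one, and yields divergence $0<\eps$. Hence KKT conditions are both necessary and sufficient for global optimality. Introducing a multiplier $\mu_0$ for the equality $\sum_ip_i=1$, multipliers $\alpha_i\ge0$ for $p_i\ge0$, and $\beta\ge0$ for the divergence constraint, stationarity reads $-c_i+\mu_0-\alpha_i+\beta\frac{2(p_i-q_i)}{q_i}=0$.

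Next I would argue that the divergence constraint is active, i.e.\ $\beta>0$. If $\beta=0$ the system collapses to the KKT system of maximizing $\bm c^\top\bm p$ over the plain simplex, whose solution is the concentrated vector $\hat{\bm p}$ of Theorem \ref{thm0}; but that vector violates \eqref{eq:check2} by hypothesis and is therefore infeasible, so $\beta=0$ is impossible. With $\beta>0$, solving stationarity together with complementary slackness $\alpha_ip_i=0$ and setting $\lambda:=2\beta-\mu_0$ yields the thresholding form $p_i=\frac{q_i}{2\beta}\max(c_i+\lambda,0)$; the case split $p_i>0$ versus $p_i=0$ shows $\alpha_i=-(c_i+\lambda)$, which is nonnegative exactly on the inactive coordinates, so this $\bm p$ is KKT-consistent. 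Imposing $\sum_ip_i=1$ forces $2\beta=\sum_jq_j\max(c_j+\lambda,0)=\sum_j\hat p_j$, which simultaneously gives the normalization $p_i=\hat p_i/\sum_j\hat p_j$ claimed in the theorem and fixes $\beta=\frac12\sum_j\hat p_j$.

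It then remains to convert the active constraint into the scalar equation $h_5(\lambda)=0$. Writing $m_i:=\max(c_i+\lambda,0)$, $S_1:=\sum_iq_im_i$, $S_2:=\sum_iq_im_i^2$, and using $\sum_iq_i=1$, a short expansion gives $\sum_i\frac{(p_i-q_i)^2}{q_i}=\sum_iq_i(m_i/S_1-1)^2=S_2/S_1^2-1$. Setting this equal to $\eps$ and clearing $S_1^2$ produces precisely $S_2=(1+\eps)S_1^2$, which is $h_5(\lambda)=0$ in \eqref{eq:thm5_1}. Conversely, any $\lambda$ with $h_5(\lambda)=0$ and $S_1>0$ reconstructs, through \eqref{eq:thm5_2} and the normalization, a point meeting all KKT conditions, hence the global optimum; this closes the equivalence.

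The main work, and the step I expect to be the genuine obstacle, is proving existence of such a root in the stated range $\lambda>-\cmax$. I would argue by continuity (the $\max$ and $\max^2$ terms make $h_5$ continuous in $\lambda$) and the intermediate value theorem. As $\lambda\to+\infty$ all $m_i=c_i+\lambda>0$ and the leading term of $h_5$ is $-\eps\lambda^2$, so $h_5(\lambda)\to-\infty$. For the lower end set $t:=\lambda+\cmax\to0^+$; then only $i\in I$ contribute ($m_i=t$, and $m_i=0$ for $i\notin I$), giving $h_5=t^2 Q_I\bigl(1-(1+\eps)Q_I\bigr)$ with $Q_I:=\sum_{i\in I}q_i$. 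The delicate point is the sign of this quantity, and here I would invoke the violated hypothesis \eqref{eq:check2}: a direct computation of the Modified $\chi^2$ divergence of $\hat{\bm p}$ gives $\sum_iq_i\phi_5(\hat p_i/q_i)=(1-Q_I)/Q_I$, so \eqref{eq:check2} being violated is exactly $1-(1+\eps)Q_I>0$. Thus $h_5>0$ for small $t>0$, and the intermediate value theorem supplies a root $\lambda>-\cmax$; the requirement $S_1>0$ (which fails precisely when $\lambda\le-\cmax$, since then every $m_i=0$) is what pins the admissible range to $\lambda>-\cmax$.
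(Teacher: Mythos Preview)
Your proposal is correct and follows essentially the same route as the paper: set up the KKT system for \eqref{eq:problem1}, rule out $\mu=0$ via the violation of \eqref{eq:check2}, derive the thresholded form $p_i=\tfrac{q_i}{2\mu}\max(c_i+\lambda,0)$, and eliminate $\mu$ from the two feasibility equations to arrive at $h_5(\lambda)=0$.

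The one noteworthy difference is how existence of the root is established. The paper's proof of Theorem~\ref{thm5} does not give a direct analytic argument; existence is implicit (the feasible set is compact, so an optimum exists, and since Slater holds the KKT system must be solvable with $\mu>0$, hence some $\lambda>-\cmax$ satisfies $h_5(\lambda)=0$). The sign behaviour of $h_5$ near $-\cmax$ that you work out---computing the divergence of $\hat{\bm p}$ as $(1-Q_I)/Q_I$ and translating ``\eqref{eq:check2} violated'' into $1-(1+\eps)Q_I>0$---is precisely what the paper proves separately in the $h_5$ part of Proposition~\ref{prop:convex}, where it is used to initialize Newton's method rather than to certify existence. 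Your IVT argument is therefore a self-contained alternative that folds that later analysis into the proof of the theorem itself; the paper's abstract route is shorter but less constructive.

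One minor imprecision: when $\beta=0$ the KKT system admits \emph{every} probability vector supported on $I$, not only $\hat{\bm p}$. The argument still goes through because $\hat{\bm p}$ minimizes the divergence among all such vectors (this is the content of \eqref{eq:check0}--\eqref{eq:check1}), so if $\hat{\bm p}$ violates \eqref{eq:check2} then all of them do; you may want to say this explicitly.
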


\subsection{Distributionally robust optimization with norms}\label{sec:norms}

Anothor possibility to measure the distance between $\bm p$ and $\bm q$ is to use $l_p$ norms. This results in the following problem
\begin{equation}\label{eq:problem2}\tag{DRO2}
\aligned
\mxmz_{\bm p}\ &\bm c^\top \bm p \\
\st\ &\sum_{i=1}^np_i=1,\\
&0\le p_i,\quad\forall i=1,\dots,n,\\
&\norm{\bm p - \bm q}_p\le\eps,
\endaligned
\end{equation}
Note that the $l_1$ norm also generates a $\phi$-divergence but we handle it here. The simple algorithms for solving \eqref{eq:problem2} with the $l_1$ and $l_\infty$ norms are presented in Appendix \ref{app:norms}. Here, we show the results for the $l_2$ norm.

\begin{theorem}[$l_2$ norm]\label{thm6}
Let Assumption \ref{ass} hold true and define vector $\hat{\bm p}$ with components
\begin{equation}\label{eq:check3}
\hat p_i = \begin{cases} q_i + \frac{1}{\nrm{I}} - \frac{1}{\nrm{I}}\sum_{i\in I}q_j &\text{if }i\in I, \\ 0 &\text{otherwise.} \end{cases}
\end{equation}
If this solution satisfies
\begin{equation}\label{eq:check4}
\norm{\hat{\bm p} - \bm q}\le\eps,
\end{equation}
then $\hat{\bm p}$ is the optimal solution of \eqref{eq:problem2}. If $\hat {\bm p}$ violates \eqref{eq:check4}, then there exists some $\mu>0$ and $\lambda$ which solve
\begin{subequations}\label{eq:thm6_1}
\begin{align}
\label{eq:thm6_11} \sum_{i=1}^n{\min}^2\left(\lambda-c_i, \mu q_i\right) - \eps^2\mu^2&= 0, \\
\label{eq:thm6_12} \sum_{i=1}^n\min\left(\lambda-c_i, \mu q_i\right) &= 0.
\end{align}
\end{subequations}
Moreover, the optimal solution of \eqref{eq:problem2} with the $l_2$ norm equals to
\begin{equation}\label{eq:thm6_2}
p_i = \max\left(q_i - \frac{1}{\mu}(\lambda-c_i), 0\right).
\end{equation}
\end{theorem}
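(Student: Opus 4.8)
The plan is to attack \eqref{eq:problem2} (with the $l_2$ norm) through its Karush--Kuhn--Tucker system, exactly as for the $\phi$-divergences. First I would replace the constraint $\norm{\bm p-\bm q}_2\le\eps$ by the equivalent smooth constraint $\frac12\norm{\bm p-\bm q}^2\le\frac12\eps^2$ and note that the resulting problem is convex (linear objective, linear equality and inequalities, one convex quadratic inequality). Since $\bm q$ itself is strictly feasible by Assumption (A1) ($q_i>0$, $\sum q_i=1$, and $\norm{\bm q-\bm q}=0<\eps$), Slater's condition holds, so KKT is necessary and sufficient for optimality; moreover the feasible set is compact and nonempty, so an optimum exists. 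Attaching multiplier $\lambda$ to $\sum_i p_i=1$, multipliers $s_i\ge0$ to $-p_i\le0$, and multiplier $\mu\ge0$ to the quadratic constraint, stationarity reads $\mu(p_i-q_i)=c_i-\lambda+s_i$. Combined with complementary slackness $s_ip_i=0$ this yields, for $\mu>0$, precisely \eqref{eq:thm6_2}: on the support $s_i=0$ forces $p_i=q_i-\frac1\mu(\lambda-c_i)>0$, while off the support $s_i=\lambda-c_i-\mu q_i\ge0$ makes the max vanish.

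Next I would split according to whether the quadratic constraint is active at the optimum. If $\mu=0$, stationarity reduces to $s_i=\lambda-c_i\ge0$, so $\lambda\ge\cmax$, and complementarity forces $\operatorname{supp}(\bm p)\subseteq I=\{i:c_i=\cmax\}$; hence the optimum lies on the optimal face $F=\{\bm p\ge0:\sum_{i\in I}p_i=1,\ p_i=0\ \forall i\notin I\}$, on which the objective attains its ceiling $\cmax$. The key observation is that $\hat{\bm p}$ of \eqref{eq:check3} is exactly the Euclidean projection of $\bm q$ onto the affine hull $\{p_i=0\ \forall i\notin I,\ \sum_{i\in I}p_i=1\}$ (the closed form \eqref{eq:check3} comes from the standard Lagrange computation), and Assumption (A1) guarantees $\hat p_i>0$ on $I$, so this projection lands inside $F$. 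Consequently $\operatorname{dist}(\bm q,F)=\norm{\hat{\bm p}-\bm q}$, and $F$ meets the ball $\{\norm{\bm p-\bm q}\le\eps\}$ if and only if \eqref{eq:check4} holds. This settles the first assertion: when \eqref{eq:check4} holds, $\hat{\bm p}\in F$ is feasible and attains $\cmax$, hence is optimal.

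For the main assertion I would argue that, when \eqref{eq:check4} is violated, $F$ is disjoint from the ball, so no feasible point attains $\cmax$, so the case $\mu=0$ is impossible; therefore $\mu>0$ and the quadratic constraint is active, $\norm{\bm p-\bm q}=\eps$. It then remains to push the two active constraints through \eqref{eq:thm6_2}. Rewriting \eqref{eq:thm6_2} as $\mu(p_i-q_i)=-\min(\lambda-c_i,\mu q_i)$, summing over $i$ and using $\sum p_i=\sum q_i=1$ gives $\sum_i\min(\lambda-c_i,\mu q_i)=0$, i.e.\ \eqref{eq:thm6_12}; squaring and summing gives $\sum_i\min^2(\lambda-c_i,\mu q_i)=\mu^2\norm{\bm p-\bm q}^2=\mu^2\eps^2$, i.e.\ \eqref{eq:thm6_11}. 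Existence of $(\mu,\lambda)$ with $\mu>0$ solving \eqref{eq:thm6_1} is then exactly the existence of the KKT multipliers guaranteed above, and \eqref{eq:thm6_2} reads off from stationarity. Uniqueness of the optimizer in this case follows from strict convexity of the ball: two distinct optima would place their midpoint strictly inside the ball while staying optimal, returning us to the excluded case $\mu=0$.

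I expect the main obstacle to be the glue of the case analysis rather than any single computation: identifying $\hat{\bm p}$ as the metric projection of $\bm q$ onto the optimal face (including the sign check $\hat p_i>0$ that places it inside $F$) and using this to convert the geometric dichotomy ``face meets ball vs.\ not'' into the analytic dichotomy ``$\mu=0$ vs.\ $\mu>0$''. Once that equivalence is secured, deriving \eqref{eq:thm6_11}--\eqref{eq:thm6_12} from the active constraints and reading off \eqref{eq:thm6_2} are routine.
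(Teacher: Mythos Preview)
Your proposal is correct and follows essentially the same route as the paper: both verify Slater at $\bm q$, write the KKT system with multipliers $(\lambda,\mu,\bm s)$, split into the cases $\mu=0$ and $\mu>0$, identify $\hat{\bm p}$ as the solution of the auxiliary projection problem onto the optimal face (the paper's problem \eqref{eq:check00}), and in the case $\mu>0$ plug $p_i=\max\bigl(q_i-\tfrac1\mu(\lambda-c_i),0\bigr)$ into the two active constraints to obtain \eqref{eq:thm6_11}--\eqref{eq:thm6_12} via the identity $\mu(p_i-q_i)=-\min(\lambda-c_i,\mu q_i)$. Your phrasing is slightly more geometric (distance from $\bm q$ to the optimal face $F$) whereas the paper states it as an auxiliary minimization, and your closing uniqueness remark is an extra not present in the paper, but the substance of the argument is the same.
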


Unlike in the previous cases, system \eqref{eq:thm6_1} consists of two equations. To reduce them to one, we define first a function $g_6(\lambda;\mu)$ of $\lambda$ with fixed parameter $\mu$ by
\begin{equation}\label{eq:defin_g1}
g_6 (\lambda;\mu) := \sum_{i=1}^n\min\left(\lambda-c_i, \mu q_i\right).
\end{equation}
Lemma \ref{lemma:g6} states that for each $\mu>0$, there is unique $\lambda$ solving $g_6(\lambda;\mu)=0$. We stress this dependence of $\lambda$ on $\mu$ by writing $\lambda(\mu)$. Moreover, the same lemma states that $\lambda(\mu)$ is a continuous function. Defining the continuous function
\begin{equation}\label{eq:defin_f1}
h_6(\mu) := \sum_{i=1}^n{\min}^2\left(\lambda(\mu)-c_i, \mu q_i\right) - \eps^2\mu^2,
\end{equation}
we observe that solving system \eqref{eq:thm6_1} can be reduced to solving the single equation $h_6(\mu)=0$.

\subsection{Projection onto simplex with additional linear equality}\label{sec:simplex}

In Section \ref{sec:convex} we argued that \eqref{eq:problem2} with $p=2$ is equivalent to projecting onto the canonical simplex with additional linear inequality. In this section, we consider one more problem of the projection onto the simplex with additional upper bounds. This problem in a slightly more general form reads
\begin{equation}\label{eq:problem3}\tag{SIMPLEX}
\aligned
\mnmz_{\bm p}\ &\frac12\norm{\bm p-\bm q}^2 \\
\st\ &\sum_{i=1}^n p_i = 1, \\
&l_i\le p_i\le u_i
\endaligned
\end{equation}
We obtain the reduced optimality conditions as follows, where $\clip$ is the projection operator.

\begin{theorem}[Simplex with upper bounds]\label{thm7}
Assume that the feasible set of \eqref{eq:problem3} is non-empty. Then there exists some $\lambda\in\R$ which solves
\begin{equation}\label{eq:thm7_1}
h_7(\lambda) := \sum_{i=1}^n \clip_{[l_i,u_i]}(q_i-\lambda) - 1 = 0.
\end{equation}
Moreover, the optimal solution of \eqref{eq:problem3} equals to
\begin{equation}\label{eq:thm7_2}
p_i = \clip_{[l_i,u_i]}(q_i-\lambda).
\end{equation}
\end{theorem}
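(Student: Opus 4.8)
The plan is to dualize only the equality constraint $\sum_{i=1}^n p_i = 1$ while retaining the box constraints $l_i \le p_i \le u_i$ as hard constraints; this turns \eqref{eq:problem3} into a problem that separates coordinatewise into one-dimensional projections. First I would note that the objective $\frac12\norm{\bm p-\bm q}^2$ is strictly convex and coercive and that the feasible set is nonempty, closed and convex, so a unique minimizer exists. Since every constraint is affine, the KKT conditions are necessary and sufficient and no constraint qualification is needed, so it suffices to exhibit a $\lambda$ for which the Lagrangian minimizer is primal feasible.

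Next I would form the partial Lagrangian $L(\bm p,\lambda) = \frac12\norm{\bm p-\bm q}^2 + \lambda\left(\sum_{i=1}^n p_i - 1\right)$ and minimize it over the box $\prod_{i=1}^n[l_i,u_i]$ with $\lambda$ fixed. Completing the square coordinatewise, $\frac12(p_i-q_i)^2 + \lambda p_i = \frac12\left(p_i - (q_i-\lambda)\right)^2 + \text{const}$, so the minimization decouples and each coordinate becomes the Euclidean projection of $q_i-\lambda$ onto the interval $[l_i,u_i]$. This yields exactly $p_i(\lambda) = \clip_{[l_i,u_i]}(q_i-\lambda)$, which is the formula \eqref{eq:thm7_2}. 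Imposing primal feasibility $\sum_{i=1}^n p_i(\lambda) = 1$ is then precisely $h_7(\lambda)=0$ in \eqref{eq:thm7_1}, and strong duality for affine-constrained convex programs guarantees that the $\lambda$ realizing this recovers the minimizer.

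To make the existence of such a $\lambda$ explicit I would invoke the intermediate value theorem: $h_7$ is continuous and nonincreasing in $\lambda$, with $h_7(\lambda)\to\sum_{i=1}^n u_i - 1$ as $\lambda\to-\infty$ and $h_7(\lambda)\to\sum_{i=1}^n l_i - 1$ as $\lambda\to+\infty$. Nonemptiness of the feasible set forces $\sum_i l_i \le 1 \le \sum_i u_i$, so these two limits are respectively $\ge 0$ and $\le 0$, and a root exists. The argument is largely routine; the only step needing genuine care is verifying that the one-dimensional projection formula is correct at the endpoints of $[l_i,u_i]$, equivalently that the signs of the box multipliers are consistent with $q_i-\lambda$ lying outside the interval. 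The nonemptiness hypothesis is used exactly once, to produce the sign change of $h_7$ that yields $\lambda$.
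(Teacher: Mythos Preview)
Your argument is correct. The paper actually omits the proof of this theorem, stating only that it ``is similar to other proofs''; judging from those other proofs (Theorems \ref{thm1}--\ref{thm6}), the intended route is to write the full KKT system with explicit multipliers $\alpha_i,\beta_i$ for the lower and upper bounds, derive $p_i-q_i+\lambda=\alpha_i-\beta_i$, and then use complementarity to collapse the three cases into the clip formula. Your partial-Lagrangian approach---dualizing only the equality and minimizing directly over the box---reaches the same conclusion more cleanly, since the coordinatewise projection onto $[l_i,u_i]$ handles the sign analysis of the box multipliers implicitly rather than by case distinction. Your existence argument via the intermediate value theorem is also more explicit than anything the paper provides for this result; the paper only observes (in Section~\ref{sec:lambda}) that $h_7(\min_i(q_i-u_i))\ge 0$, without stating the companion bound.
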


\noindent Function $h_7$ is piecewise linear and non-increasing in $\lambda$. This allows us to find a simple algorithm to find the solution, we present it in Algorithm \ref{alg:simplex}.

\section{Numerical considerations}\label{sec:bounds}

In the previous section, we derived theoretical results which will be the bases for numerical methods for solving \eqref{eq:problem1}, \eqref{eq:problem2} and \eqref{eq:problem3}. In this section, we introduce these numerical methods and derive their complexity.

\subsection{Computation of $\lambda$}\label{sec:lambda}

For problems \eqref{eq:problem1} including the $\phi$-divergences and for \eqref{eq:problem3}, we reduced the optimality conditions into one equation in one variable. For \eqref{eq:problem2} with $l_2$ norm, we reduced it into two equations in two variables from which $\lambda$ is implicitly computed, further reducing the system into one equation in one variable. It is not difficult to show that this implicit equation \eqref{eq:thm6_12} is equivalent to
\begin{equation}\label{eq:reduction1}
\sum_{i=1}^n \max\left(\mu q_i+c_i-\lambda,0\right) - \mu = 0.
\end{equation}
Since \eqref{eq:reduction1} is identical to \eqref{eq:system0}, there are algorithms in $O(n)$ which compute $\lambda$ for any fixed $\mu$. For simplicity, we implemented a simpler algorithm which first sorts $\mu \bm q+\bm c$ and then finds $\lambda$ in one pass through the sorted array.

For the analysis of \eqref{eq:problem3} we realize that $h_7$ is a piecewise linear function which is non-increasing in $\lambda$. Since this problem differs from \eqref{eq:system0} only by the upper bound, we conjecture that there is an algorithm solving it in $O(n)$. Here, we present an algorithm with complexity $O(n\log n)$. We have
$$
h_7\big(\min_i(q_i - u_i)\big) = \sum_{i=1}^n u_i - 1\ge 0.
$$
The inequality holds due to the assumption that the feasible set of \eqref{eq:problem3} is non-empty.

Denote $\bm s$ the sorted version of $\bm q-\bm l$ and $\bm r$ the sorted version of $\bm q-\bm u$. The main idea of Algorithm \ref{alg:simplex} is to utilize the piecewise linearity of $h_7$ with kinks at $s_i$ and $r_j$ by tracking the current slope $\hat a$. Algorithm \ref{alg:simplex} is an iterative procedure where at every iteration, we know the values of $h_7(s_{i-1})$ and $h_7(r_{j-1})$ and we want to evaluate $h_7$ at the next point. If $r_j\le s_i$, then we consider $\lambda=r_j$ and increase $j$ by one. Since a new point enters the active set which contributes to the slope, we increase the slope $\hat a$ by $1$. If $r_j > s_i$, then we consider $\lambda=s_i$ and increase $i$ by one. Since one point leaves the active set, we decrease the slope $\hat a$ by $1$. In both cases, $g$ is decreased by $\hat a$ times the difference between the old value and the new values of $\lambda$. Once $g$ decreases below $0$, we stop the algorithm and linearly interpolate between the last two values. To prevent an overflow, we set $r_{n+1}=\infty$. Concerning the initial values, since $r_1 < s_1$, we set $i=1$ and $j=2$.

\begin{algorithm}[H]
    \centering
    \caption{For computing $\lambda$ from \eqref{eq:thm7_1}}
    \label{alg:simplex}
    \begin{algorithmic}[1]
\State Sort $\bm q-\bm l$ into $\bm s$ and $\bm q-\bm u$ into $\bm r$\State $i\gets1$, $j\gets2$, $\hat a\gets 1$
\State $\lambda\gets r_1$, $g\gets \sum_{i=1}^n u_i-1$
\While{$g>0$}
\If {$r_j\le s_i$}
\State $g\gets g - \hat a(r_j - \lambda)$
\State $\hat a\gets \hat a + 1$
\State $\lambda\gets r_j$, $j\gets j+1$
\Else
\State $g\gets g - \hat a(s_i - \lambda)$
\State $\hat a\gets \hat a - 1$
\State $\lambda\gets s_i$, $i\gets i+1$
\EndIf
\EndWhile
\State \textbf{return} linear interpolation of the last two values of $\lambda$
    \end{algorithmic}
\end{algorithm}

\subsection{Numerical methods}

From the proofs of Theorems \ref{thm1}-\ref{thm3} we obtain
\begin{equation}\label{eq:bounds}
\aligned
\lim_{\mu\downarrow 0}\ h_1(\mu) &= +\infty, &&& h_1\left(\frac{\cmax-\cmin}{\eps}\right) &\le 0,\\
\lim_{\lambda\downarrow \cmax} h_2(\lambda) &= +\infty,  &&& h_2\left(\cmax+\frac{\cmax-\cmin}{\eps}\right) &\le 0, \\
\lim_{\lambda\downarrow \cmax} h_3(\lambda) &= -\infty,  &&& h_3\left(\cmax+\frac{(2-\eps)(\cmax-\cmin)}{\eps}\right) &\ge 0.
\endaligned
\end{equation}
This implies that the bisection method is convergent for solving $h_1(\mu)=0$, $h_2(\lambda)=0$ and $h_3(\lambda)=0$ when starting from the bounds suggested by \eqref{eq:bounds}.

To solve $h_4(\lambda)=0$, $h_5(\lambda)=0$ and $h_6(\mu)=0$ we first observe that convexity is present due to the following result.

\begin{proposition}\label{prop:convex}
We have the following:
\begin{itemize}\itemsep 0pt
\item If the assumptions of Theorem \ref{thm4} are satisfied, then $h_4$ is decreasing and convex on $(\cmax,\infty)$.
\item If the assumptions of Theorem \ref{thm5} are satisfied, then $h_5$ is positive on $(-\cmax,\lambda_0)$ and decreasing and concave on $(\lambda_0,\infty)$ for some $\lambda_0$.
\item If the assumptions of Theorem \ref{thm6} are satisfied, then $h_6$ is positive on $(0,\mu_0)$ and decreasing and concave on $(\mu_0,\infty)$ for some $\mu_0$.
\end{itemize}
\end{proposition}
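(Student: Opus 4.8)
The plan is to handle the three functions separately, in each case rewriting the relevant sums as (weighted) moments of an auxiliary measure and reducing the claim either to an elementary inequality or to a piecewise-smooth analysis. For $h_4$ I would fix $\lambda>\cmax$, introduce the probability measure assigning mass $q_i$ to $x_i:=\lambda-c_i>0$, and set $A:=\sum_i q_i x_i^{1/2}$, $B:=\sum_i q_i x_i^{-1/2}$, $C:=\sum_i q_i x_i^{-3/2}$, $D:=\sum_i q_i x_i^{-5/2}$, so $h_4=AB-1-\eps$. Differentiating under the sum gives $A'=\frac12 B$ and $B'=-\frac12 C$, hence $h_4'=\frac12(B^2-AC)$ and, after one more step, $h_4''=\frac34(AD-BC)$. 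Monotonicity is then the Cauchy--Schwarz inequality $B^2\le AC$ (take $q_i^{1/2}x_i^{1/4}$ and $q_i^{1/2}x_i^{-3/4}$). Convexity is the moment inequality $AD\ge BC$, which I would prove by symmetrization: with two independent copies $X,Y$ of the measure, $2(AD-BC)=\EE\big[X^{-5/2}Y^{-5/2}(X^3+Y^3-X^2Y-XY^2)\big]$, and the bracket factors as $(X-Y)^2(X+Y)\ge 0$. Assumption (A2) makes both inequalities strict.

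For $h_5$ I would write $S(\lambda):=\sum_i q_i\max(c_i+\lambda,0)$ and $T(\lambda):=\sum_i q_i{\max}^2(c_i+\lambda,0)$, so $h_5=T-(1+\eps)S^2$. On any interval where the active set $\{i:c_i+\lambda>0\}$ is constant, $S$ is affine with slope $w:=\sum_{c_i+\lambda>0}q_i$ and $T'=2S$, giving $h_5'=2S\left(1-(1+\eps)w\right)$ and $h_5''=2w\left(1-(1+\eps)w\right)$. Since $S>0$ on $(-\cmax,\infty)$ and $w$ is non-decreasing in $\lambda$ (from $w_0:=\sum_{i\in I}q_i$ up to $1$), both derivatives change sign exactly when $w$ crosses $\frac{1}{1+\eps}$; I define $\lambda_0$ as that crossing point. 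A short computation shows that the violation of \eqref{eq:check2} for $\phi_5$ is precisely $w_0<\frac{1}{1+\eps}$, so $\lambda_0$ exists. For $\lambda<\lambda_0$ one has $h_5'>0$, and since $h_5\to 0$ as $\lambda\downarrow-\cmax$, positivity follows; for $\lambda>\lambda_0$ one has $h_5'<0$ (decreasing) and $h_5''<0$ on each piece, while $h_5'$ additionally jumps downward at every kink (because $w$ jumps up and $S>0$), yielding global concavity.

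For $h_6$ the same scheme applies but is complicated by the implicit $\lambda(\mu)$. On a piece with active set $L$ (where the $\min$ equals $\lambda-c_i$, i.e. $p_i>0$) and inactive set $U$, the equation $g_6(\lambda;\mu)=0$ forces $\lambda(\mu)=\frac{1}{k}\left(\sum_{i\in L}c_i-\mu Q_U\right)$ with $k:=\nrm{L}$ and $Q_U:=\sum_{i\in U}q_i$, so $\lambda'(\mu)=-Q_U/k\le 0$. Writing $\Phi:=\frac{Q_U^2}{k}+\sum_{i\in U}q_i^2$, I compute $h_6'=2\mu(\Phi-\eps^2)$ and $h_6''=2(\Phi-\eps^2)$ on each piece. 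The crux is that $\Phi$ is non-increasing in $\mu$: first, since $\lambda$ is non-increasing while $c_i+\mu q_i$ increases (using $q_i>0$ from (A1)), the active set can only grow, so indices only move from $U$ to $L$; second, when index $j$ leaves $U$ the jump of $\Phi$ equals $-\frac{(Q_U+kq_j)^2}{k(k+1)}\le 0$. Evaluating the endpoints, $\Phi(0^+)=\norm{\hat{\bm p}-\bm q}^2>\eps^2$ (identifying the active set near $\mu=0$ as $I$, and using the violation of \eqref{eq:check4}) while $\Phi\to 0$ as $\mu\to\infty$; hence $\Phi-\eps^2$ changes sign once at some $\mu_0$, and the positivity/decreasing/concavity statements follow exactly as for $h_5$, using $h_6\to 0$ as $\mu\downarrow 0$.

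The two genuinely nontrivial points are the main obstacles: the moment inequality $AD\ge BC$ behind the convexity of $h_4$ (dispatched by the factorization $(X-Y)^2(X+Y)$ above), and, for $h_6$, the monotonicity of the active set together with the sign of the jumps of $\Phi$ at the kinks. Everything else is routine bookkeeping with piecewise-smooth functions; I would, however, also verify the continuity of $h_6$ (which holds because at a kink $\lambda-c_j=\mu q_j$, so the two expressions for the $j$-th term coincide) in order to glue the piecewise derivative bounds into the global concavity conclusion.
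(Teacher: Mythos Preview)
Your proof is correct and follows essentially the same route as the paper: the paper also computes $h_5'$ and $h_6'$ as $2\mu$ times a piecewise-constant factor that is non-increasing in the active set, and for $h_4$ its termwise computation of $g_{ij}''+g_{ji}''=\frac34(c_i-c_j)^2(\lambda-c_i)^{-5/2}(\lambda-c_j)^{-5/2}(2\lambda-c_i-c_j)$ is exactly your symmetrization $(X-Y)^2(X+Y)$ written out with $X=\lambda-c_i$, $Y=\lambda-c_j$. You actually supply two steps the paper leaves implicit---the Cauchy--Schwarz argument for $h_4'<0$ and the explicit jump $\Delta\Phi=-\frac{(Q_U+kq_j)^2}{k(k+1)}\le 0$---so your write-up is, if anything, more complete.
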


\begin{table}[!ht]
\centering
\caption{Table showing which problems have an exact algorithm and for which problems, the bisection and Newton's methods are convergent. Note that all problems have at least one convergent algorithm.}
\label{table:convergence}
\begin{tabular}{@{}llll@{}}\toprule
& & \multicolumn{2}{c@{}}{Guaranteed convergence} \\\cmidrule{3-4}
& Exact algorithm & Bisection & Newton \\\midrule
\eqref{eq:problem1} with Kullback-Leibler divergence & \xmark & \cmark & \xmark\\
\eqref{eq:problem1} with Burg entropy & \xmark & \cmark & \xmark\\
\eqref{eq:problem1} with Hellinger distance & \xmark & \cmark & \xmark \\
\eqref{eq:problem1} with $\chi^2$-distance & \xmark & \cmark & \cmark \\
\eqref{eq:problem1} with Modified $\chi^2$-distance & \xmark & \cmark & \cmark \\
\eqref{eq:problem2} with $l_1$ norm & \cmark & $\cdot$ & $\cdot$\\
\eqref{eq:problem2} with $l_2$ norm & \xmark & \cmark & \cmark \\
\eqref{eq:problem2} with $l_\infty$ norm & \cmark & $\cdot$ & $\cdot$\\
\eqref{eq:problem3} & \cmark & $\cdot$ & $\cdot$\\
\bottomrule
\end{tabular}
\end{table}

Lemma \ref{lemma:newton} states that if we start with a point with $h_4(\lambda)>0$, $h_5(\lambda)<0$ or $h_6(\mu)<0$, respectively, the Newton's method is convergent. We summarize this discussion in Tables \ref{table:convergence} and \ref{table:properties}. The former shows which problem has an exact algorithm and which needs to be solved via an iterative method. Note that convergence is guaranteed for each problem. The latter comments more on the iterative methods and summarizes the equations needed to solve \eqref{eq:problem1} and \eqref{eq:problem2} with the $l_2$ norm. Moreover, it provides the bounds within which the solution lies and shows whether the function in question possesses convexity. 

\begin{table}[!ht]
\centering
\caption{Properties for the bisection and Newton's method. The table shows which equation needs to be solved for \eqref{eq:problem0}, the bounds and whether the function $h$ is convex.}
\label{table:properties}
\begin{tabular}{@{}llll@{}}\toprule
& Equation & Bounds & Convex \\\midrule
\eqref{eq:problem1} with Kullback-Leibler divergence & $h_1(\mu)=0$ & $0<\mu\le \frac{\cmax-\cmin}{\eps}$ & \xmark\\
\eqref{eq:problem1} with Burg entropy & $h_2(\lambda)=0$ & $\cmax < \lambda \le \cmax+\frac{\cmax-\cmin}{\eps}$ & \xmark\\
\eqref{eq:problem1} with Hellinger distance & $h_3(\lambda)=0$ & $\cmax < \lambda \le \cmax+\frac{(2-\eps)(\cmax-\cmin)}{\eps}$ & \xmark \\
\eqref{eq:problem1} with $\chi^2$-distance & $h_4(\lambda)=0$ & $\cmax < \lambda$ & \cmark\\
\eqref{eq:problem1} with Modified $\chi^2$-distance & $h_5(\lambda)=0$ &  $-\cmax < \lambda $ & \cmark \\
\eqref{eq:problem2} with $l_2$ norm & $h_6(\mu)=0$ & $0<\mu$ & \cmark \\
\bottomrule
\end{tabular}
\end{table}

We summarize the whole procedure in Algorithm \ref{alg:schema}. The bisection method may be initialized based on the bounds from Table \ref{table:properties} while the Newton's method must be initialized based on the paragraph following Proposition \ref{prop:convex}. 

\begin{algorithm}[H]
    \centering
    \caption{For solving \eqref{eq:problem1} with any $\phi$-divergence and \eqref{eq:problem2} with $l_2$ norm}
    \label{alg:schema}
    \begin{algorithmic}[1]
\State Compute $\hat{\bm p}$ from \eqref{eq:check1} or \eqref{eq:check3}, respectively
\If{$\hat{\bm p}$ satisfies \eqref{eq:check2} or \eqref{eq:check4}, respectively}
\State The optimal distribution $\bm p$ equal to $\hat{\bm p}$ 
\Else
\If{Kullback-Leibler divergence \textbf{or} Burg entropy \textbf{or} Hellinger distance}
\State Solve $h=0$ using the bisection method
\ElsIf{$\chi^2$-distance \textbf{or} Modified $\chi^2$-distance \textbf{or} $l_2$ norm}
\State Solve $h=0$ using the Newton's method
\EndIf
\State
Compute the optimal distribution $\bm p$ from the corresponding Theorem \ref{thm1}-\ref{thm6}
\EndIf
    \end{algorithmic}
\end{algorithm}

\subsection{Complexity}

In Table \ref{table:complexity1} we show the complexity of the evaluation of $h_1,\dots,h_6$ and the total complexity of the algorithm. For \eqref{eq:problem1}, the evaluation of $h_1,\dots,h_5$ has the complexity of $O(n)$. Similarly for \eqref{eq:problem2} with the $l_2$ norm, for every $\mu$, the computation of $\lambda(\mu)$ can be done in $O(n)$ as shown is Section \ref{sec:lambda}. Thus, the evaluation of $h_6$ consumes $O(n)$ as well. We get the total complexity by multiplying this by the number of evaluations $\niter$ of $h$. In order to have a good performance, the number of evaluations $\niter$ needs to stay constant. This happened in our numerical experiments as the second row of Figure \ref{fig:results} shows. Moreover, $\niter$ is guaranteed to be constant for the bisection method whenever the bracketing interval stays constant. In the table we also included problem \eqref{eq:problem2} with $l_1$ and $l_\infty$ norms and \ref{eq:problem3}. We provide a comparison of the theoretical and the observed complexity in Table \ref{table:complexity2} later.

\begin{table}[!ht]
\centering
\caption{Computational complexity evaluating function $h$ and the total complexity for solving problems \eqref{eq:problem1}, \eqref{eq:problem2} and \eqref{eq:problem3}. Here, $\niter$ refers to the number of evaluation of the function $h$ which is guaranteed to be constant for the bisection method whenever its bracketing interval from Table \ref{table:properties} is uniformly bounded. The observed complexity is shown in Table \ref{table:complexity2} later.}
\label{table:complexity1}
\begin{tabular}{@{}lll@{}}\toprule
& Evaluation of $h$ & Total \\\midrule
\eqref{eq:problem1} & $O(n)$ & $O(\niter n)$ \\
\eqref{eq:problem2} with $l_1$ norm & $-$ & $O(n\log n)$ \\
\eqref{eq:problem2} with $l_2$ norm & $O(n)$ & $O(\niter n)$ \\
\eqref{eq:problem2} with $l_\infty$ norm & $-$ & $O(n\log n)$ \\
\eqref{eq:problem3} & $-$ & $O(n\log n)$ \\
\bottomrule
\end{tabular}
\end{table}

\section{Numerical results}\label{sec:numerics}

In this section, we present the numerical results. We recall that our codes are available online.$^{\ref{footnote:codes}}$ In Section \ref{sec:results}, we derived the monotonicity and convexity of functions $h_1,\dots,h_7$ corresponding to problems \eqref{eq:problem1}, \eqref{eq:problem2} and \eqref{eq:problem3} and in Section \ref{sec:bounds}, we argued that finding a zero of these functions should be easy. This is confirmed in Figure \ref{fig:h}. We see that $h_1$, which corresponds to \eqref{eq:problem1} with Kullback-Leibler divergence, is decreasing and seems to be convex. Similarly, $h_6$ corresponding to \eqref{eq:problem2} with $l_2$ norm is first increasing and after approximately $\mu=14$ decreasing and concave. The convexity for $h_1$ was not proven while the concavity for $h_6$ follows from Proposition \ref{prop:convex}.

\begin{figure}[!ht]
\begin{tikzpicture}
 \pgfplotsset{small,width=8cm,samples=30}
 \begin{groupplot}[group style = {group size = 2 by 1, horizontal sep = 22pt}, grid=major, grid style={dotted, gray!50}]
 \nextgroupplot[title={\eqref{eq:problem1} with Kullback-Leibler divergence}, xlabel={$\mu$}, ylabel={Function $h(\mu)$}]
    \addplot [line1] table[x index=0, y index=1] {\tabH};
    \addplot [line4, dashed] table[x index=0, y index=2] {\tabH};    
 \nextgroupplot[title={\eqref{eq:problem2} with $l_2$ norm}, xlabel={$\mu$}, ylabel={}]
    \addplot [line1] table[x index=3, y index=4] {\tabH};
    \addplot [line4, dashed] table[x index=3, y index=5] {\tabH};    
 \end{groupplot}
\end{tikzpicture}
\caption{Functions $h_1(\mu)$ and $h_6(\mu)$. Finding zeros of these points is equivalent to solving \eqref{eq:problem1} with Kullback-Leibler divergence and for \eqref{eq:problem2} with $l_2$ norm.} 
\label{fig:h}
\end{figure}
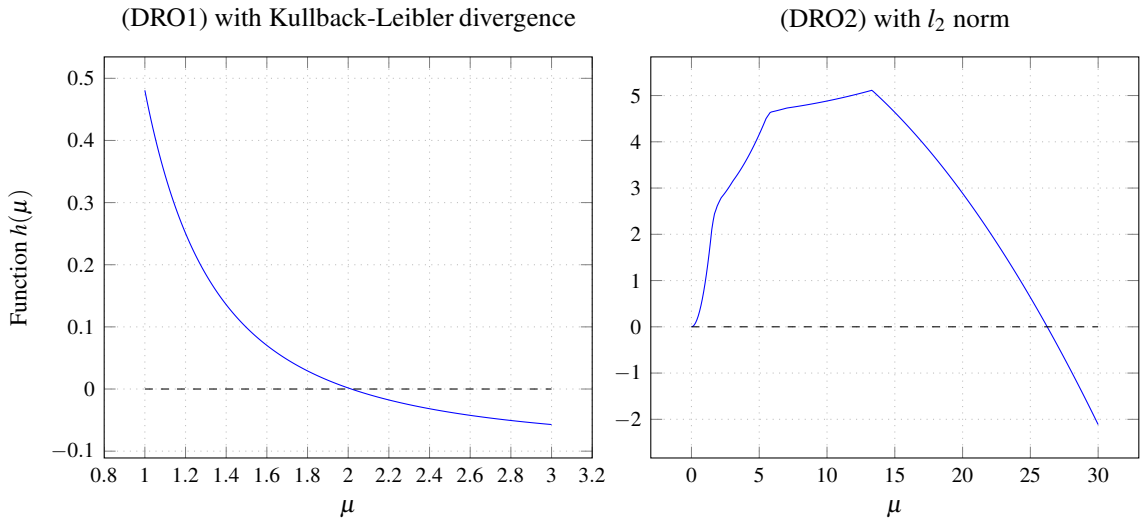

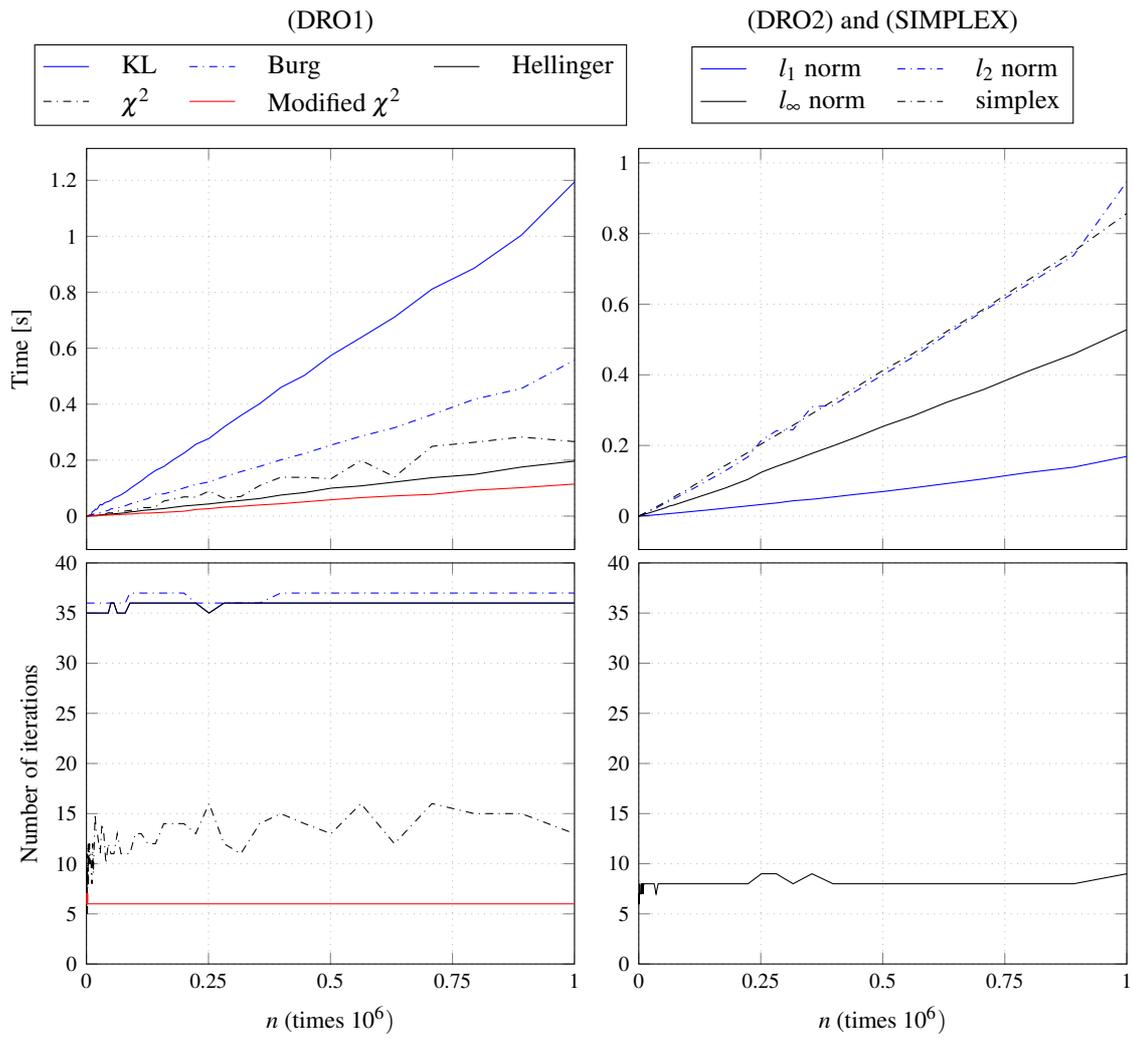
\begin{figure}
\begin{tikzpicture}
 \pgfplotsset{small,width=8cm,samples=30}
 \begin{groupplot}[xmin=0, xmax=1000000, scaled x ticks = false, xtick={0,250000,500000,750000,1000000}, xticklabels={0,0.25,0.5,0.75,1}, group style = {group size = 2 by 2, vertical sep = 5pt, horizontal sep = 24pt}, grid=major, grid style={dotted, gray!50}, legend cell align={left}]
 \nextgroupplot[align=center, title={\eqref{eq:problem1}\\ \\ \\}, ylabel={Time [s]}, xticklabels={}, legend style = legendStyleA, legend columns=3]
    \addplot [line1] table[x index=0, y index=2] {\tabTime};\addlegendentry{KL};
    \addplot [line2] table[x index=0, y index=6] {\tabTime};
    \addlegendentry{Burg};
    \addplot [line3] table[x index=0, y index=4] {\tabTime};
    \addlegendentry{Hellinger};
    \addplot [line4] table[x index=0, y index=8] {\tabTime};
    \addlegendentry{$\chi^2$};
    \addplot [line5] table[x index=0, y index=10] {\tabTime};
    \addlegendentry{Modified $\chi^2$};
 \nextgroupplot[align=center, title={\eqref{eq:problem2} and \eqref{eq:problem3} \\  \\ \\}, ylabel={}, xticklabels={}, legend style = legendStyleB, legend columns=2]
    \addplot [line1] table[x index=0, y index=14] {\tabTime};
    \addlegendentry{$l_1$ norm};
    \addplot [line2] table[x index=0, y index=16] {\tabTime};
    \addlegendentry{$l_2$ norm};
    \addplot [line3] table[x index=0, y index=12] {\tabTime};
    \addlegendentry{$l_\infty$ norm};
    \addplot [line4] table[x index=0, y index=19] {\tabTime};
    \addlegendentry{simplex};
 \nextgroupplot[align=center, title={}, ylabel={Number of iterations}, xlabel={$n$ (times $10^6)$}, ymin=0, ymax=40]
    \addplot [line1] table[x index=0, y index=2] {\tabIter};
    \addplot [line2] table[x index=0, y index=3] {\tabIter};
    \addplot [line3] table[x index=0, y index=4] {\tabIter};
    \addplot [line4] table[x index=0, y index=5] {\tabIter};
    \addplot [line5] table[x index=0, y index=6] {\tabIter};
 \nextgroupplot[align=center, title={}, ylabel={}, xlabel={$n$ (times $10^6)$}, ymin=0, ymax=40]
    \addplot [line3] table[x index=0, y index=7] {\tabIter};
 \end{groupplot}
 \node at ($(group c1r1) + (0,3.5)$) {\ref{grouplegendA}};  
 \node at ($(group c2r1) + (0,3.5)$) {\ref{grouplegendB}};  
\end{tikzpicture}
\caption{Performance of our methods for \eqref{eq:problem1} (left) and for \eqref{eq:problem2} and \eqref{eq:problem3} (right) for $n\in[10^3,10^6]$. The first row shows the measured times in seconds while the second row show the number of evaluations of $h(\mu)$ or $h(\lambda)$.}
\label{fig:results}
\end{figure}

For numerical comparison, we randomly generated the initial data $\bm q$ and $\bm c$ and solved problems \eqref{eq:problem1}, \eqref{eq:problem2} and \eqref{eq:problem3}. This was repeated hundred times and the results were averaged to remove random bias. The main comparison is presented in Figure \ref{fig:results}. The left column corresponds to problem \eqref{eq:problem1} while the right column to problems \eqref{eq:problem2} and \eqref{eq:problem3}. The $x$ axis always depicts the number of input data $n$ chosen in the range $n\in [10^3,10^6]$. The first row depicts the computational time in seconds. The second row depicts the number of evaluations of $h_1,\dots,h_6$.

We observe that the number of evaluations of $h$ in the second row stays relatively constant. Coming back to Table \ref{table:complexity1}, this implies that $\niter$ is constant and the total complexity should be $O(n)$ or $O(n\log n)$. This is confirmed in the first row of Figure \ref{fig:results} where we see the (approximately) linear dependence of time on the data size $n$. To give a more quantitative result, we have interpolated the measured times with function $t(n)=an^b$ for the best possible parameters $a$ and $b$. We show this interpolation in Table \ref{table:complexity2}. We see that the interpolation is close to linear. Note that the larger power of $n$ may hide the logarithm as the domain for $n$ is bounded.

\begin{table}[!ht]
\centering
\caption{Comparison of the observed and theoretical complexity for our methods. In most cases our methods exhibit the complexity of $O(n)$ or $O(n\log n)$ which concurs with Table \ref{table:complexity1}. }
\label{table:complexity2}
\begin{tabular}{@{}lll@{}} \\\toprule
& Observed complexity & Theoretical complexity \\\midrule
\eqref{eq:problem1} with Kullback-Leibler divergence & $3.328\cdot 10^{ -7}n^{1.102}$ & $O(n\niter)$ \\
\eqref{eq:problem2} with $l_1$ norm & $2.794\cdot 10^{ -8}n^{1.125}$ & $O(n\log n)$ \\
\eqref{eq:problem2} with $l_2$ norm & $3.759\cdot 10^{ -7}n^{1.056}$ & $O(n\niter)$ \\
\eqref{eq:problem2} with $l_\infty$ norm & $2.802\cdot 10^{ -7}n^{1.042}$ & $O(n\log n)$ \\ 
\eqref{eq:problem3} & $4.911\cdot 10^{ -7}n^{1.039}$ & $O(n\log n)$ \\  \bottomrule
\end{tabular}
\end{table}

In Figure \ref{fig:comparison} we compare our results to other solvers. We used the package \texttt{JuMP} for Julia \cite{JuMP.jl-2017}. It employed IPOPT for \eqref{eq:problem1} and CPLEX for \eqref{eq:problem2} and \eqref{eq:problem3}. Moreover, for \eqref{eq:problem2} with $l_2$ norm we compared ourselves to the algorithm from \cite{philpott2018distributionally}. To keep the computation possible, we had to reduce the number of points from $n=10^6$ to $n=10^4$. We can see that our algorithms perform significantly better. CPLEX and IPOPT seem to also possess linear complexity unlike the algorithm from \cite{philpott2018distributionally} with quadratic complexity. Its complexity estimate from Table \ref{table:complexity2} was $5.138\cdot 10^{ -7}n^{1.632}$.


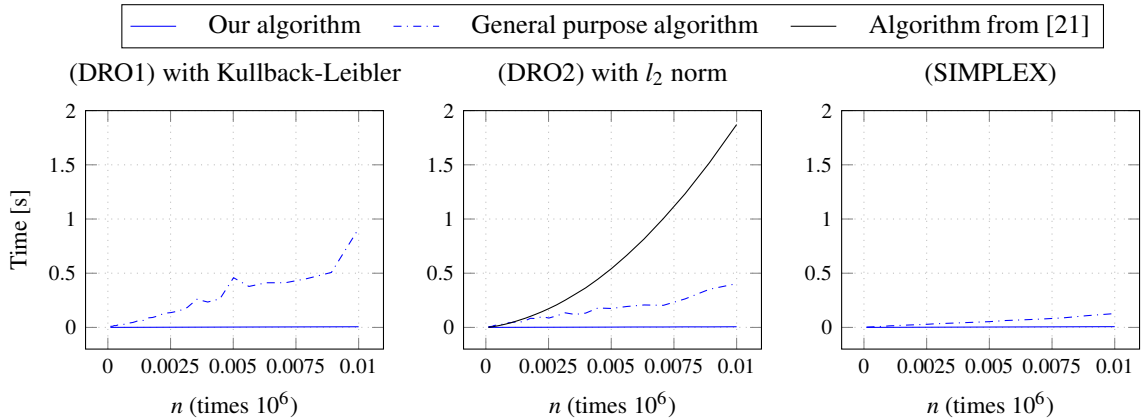
\begin{figure}[!ht]
\begin{tikzpicture}
 \pgfplotsset{small,width=5.5cm,samples=30}
 \begin{groupplot}[group style = {group size = 3 by 1, horizontal sep = 30pt}, grid=major, grid style={dotted, gray!50}, legend cell align={left}, scaled x ticks = false, xtick={0,2500,5000,7500,10000}, xticklabels={0,0.0025,0.005,0.0075,0.01}, scaled y ticks = false, ymin=-0.2, ymax=2]
 \nextgroupplot[title={\eqref{eq:problem1} with Kullback-Leibler}, xlabel={$n$ (times $10^6$)}, ylabel={Time [s]}]
    \addplot [line1] table[x index=1, y index=2] {\tabTime};
    \addplot [line2] table[x index=1, y index=3] {\tabTime};    
 \nextgroupplot[title={\eqref{eq:problem2} with $l_2$ norm}, xlabel={$n$ (times $10^6$)}, ylabel={}, legend style = legendStyleC, legend columns=3]
    \addplot [line1] table[x index=1, y index=16] {\tabTime};
    \addlegendentry{Our algorithm};
    \addplot [line2] table[x index=1, y index=17] {\tabTime};
    \addlegendentry{General purpose algorithm};
    \addplot [line3] table[x index=1, y index=18] {\tabTime};
    \addlegendentry{Algorithm from \cite{philpott2018distributionally}};   
 \nextgroupplot[title={\eqref{eq:problem3} \\}, xlabel={$n$ (times $10^6$)}, ylabel={}]
    \addplot [line1] table[x index=1, y index=19] {\tabTime};
    \addplot [line2] table[x index=1, y index=20] {\tabTime};
 \end{groupplot}
  \node at ($(group c2r1) + (0,2.7)$) {\ref{grouplegendC}};  
\end{tikzpicture}
\caption{Comparison of our method, general-purpose solvers (IPOPT and CPLEX) and the algorithm from \cite{philpott2018distributionally}.} 
\label{fig:comparison}
\end{figure}

\section*{Acknowledgements}

This work was supported by National Natural Science Foundation of China (Grant No. 61850410534), the Program for Guangdong Introducing Innovative and Enterpreneurial Teams (Grant No. 2017ZT07X386), Shenzhen Peacock Plan (Grant No. KQTD2016112514355531) and the Grant Agency of the Czech Republic (18-21409S).

\appendix

\section{Problem \eqref{eq:problem2} with $l_1$ and $l_\infty$ norm}\label{app:norms}

Here we present algorithms for solving \eqref{eq:problem2} for the $l_1$ and $l_\infty$ norms. Since $\bm q$ is a probability distribution due to Assumption \ref{ass}, if we increase some components of $\bm q$, we have to decrease some components of $\bm q$ by the same margin. The priority is on increasing coordinates of $\bm q$ with the lowest value of $\bm c$ while decreasing those with the largest value. We summarize this procedure in Algorithms \ref{algorithm1} and \ref{algorithm2}. Sorting $\bm c$, the lowest values have the lowest indices and similarly for the largest values. Thus, we start with $i=1$ and $j=n$. Then we increase $q_i$ by a possible maximal margin $\delta_1$ and start decreasing $q_j$, $q_{j-1}$ and so on until the total reduction $\delta_2$ equals to $\delta_1$. After doing so, we increase $i$ by one and continue until $i=j$. Note that $\delta_{\rm dec}$ in Algorithm \ref{algorithm1} measures the decrease of $p_j$ while $\delta_{\rm tot}$ in Algorithm \ref{algorithm2} measures the total reduction of $p_j,\dots,p_n$. The first one has to be bounded by $\eps$ while the other one by $\frac{\eps}{2}$.


\begin{figure*}[!ht]
\begin{minipage}{0.47\textwidth}
\begin{algorithm}[H]
    \centering
    \caption{for solving \eqref{eq:problem2} with $p=\infty$}\label{algorithm1}
    \begin{algorithmic}[1]
\Require Sorted array $\bm c$, probabilities $\bm q$, allowed perturbation level $\eps$
\State $\bm p\gets \bm q$, $i\gets 1$, $j\gets n$
\State $\delta_{\rm dec}\gets0$
\While{$i\le j$}
\State $\delta_1\gets\min\{1-p_i,\eps\}$, $\delta_2\gets 0$
\State
\State $p_i\gets p_i+\delta_1$
\While{$\delta_2<\delta_1$}
\If {$\min\{p_j, \eps-\delta_{\rm dec}\}\ge \delta_1-\delta_2$}
\State $p_j\gets p_j - \delta_1 + \delta_2$
\State $\delta_{\rm dec} \gets \delta_{\rm dec} + \delta_1 - \delta_2$
\State \textbf{break} (inner while)
\Else
\State $\delta_2\gets \delta_2+\min\{p_j, \eps-\delta_{\rm dec}\}$
\State $p_j\gets p_j - \min\{p_j, \eps-\delta_{\rm dec}\}$
\State $\delta_{\rm dec}\gets 0$
\State $j\gets j-1$
\If {i==j}
\State $p_i\gets p_i - \delta_1 + \delta_2$
\State \textbf{break} (inner while)
\EndIf
\EndIf
\EndWhile
\EndWhile
\State \textbf{return} $\bm p$
    \end{algorithmic}
\end{algorithm}
\end{minipage}
\hfill
\begin{minipage}{0.47\textwidth}
\begin{algorithm}[H]
    \centering
    \caption{for solving \eqref{eq:problem2} with $p=1$}\label{algorithm2}
    \begin{algorithmic}[1]
\Require Sorted array $\bm c$, probabilities $\bm q$, allowed perturbation level $\eps$
\State $\bm p\gets \bm q$, $i\gets 1$, $j\gets n$
\State $\delta_{\rm tot}\gets 0$
\While{$i\le j$ \textbf{and} $\delta_{\rm tot}\le\frac{\eps}{2}$}
\State $\delta_1\gets\min\{1-p_i,\frac{\eps}{2}-\delta_{\rm tot}\}$, $\delta_2\gets 0$
\State $\delta_{\rm tot}\gets \delta_{\rm tot}+\delta_1$
\State $p_i\gets p_i+\delta_1$
\While{$\delta_2<\delta_1$}
\If {$p_j\ge \delta_1-\delta_2$}
\State $p_j\gets p_j - \delta_1 + \delta_2$
\State
\State \textbf{break} (inner while)
\Else
\State $\delta_2\gets \delta_2+p_j$
\State $p_j\gets 0$
\State
\State $j\gets j-1$
\If {i==j}
\State $p_i\gets p_i - \delta_1 + \delta_2$
\State \textbf{break} (inner while)
\EndIf
\EndIf
\EndWhile
\EndWhile
\State \textbf{return} $\bm p$
    \end{algorithmic}
\end{algorithm}
\end{minipage}
\end{figure*}

\section{Proofs}

In this section, we present all proofs. The proofs are divided into subsections as in the manuscript body. We omit the proof of Theorem \ref{thm7} as it is similar to other proofs.

\subsection{Optimality conditions for Theorems \ref{thm0}-\ref{thm5}}\label{sec:proof1}

We start with a general part which is common to all Theorems \ref{thm0}-\ref{thm5}. Since $\phi$ is convex in $\phi$, since $\eps>0$, since $d(q_i, q_i)=0$ and since $\bm q$ defines a -probability distribution, the Slater constraint qualification is satisfied at $\bm q$. Thus, problem \eqref{eq:problem1} is equivalent to its KKT optimality conditions. The Lagrangian for \eqref{eq:problem1} reads
$$
L(\bm p;\bm  \alpha, \lambda,\mu) = -\sum_{i=1}^nc_ip_i - \sum_{i=1}^n \alpha_ip_i +  \lambda\left(\sum_{i=1}^n p_i-1\right) + \mu\left(\sum_{i=1}^nd(p_i, q_i) - \eps\right).
$$
The minus in front of the first term needs be to present since \eqref{eq:problem1} is a maximization problem. The KKT conditions then amount to the optimality conditions
\begin{subequations}\label{eq:problem1_KKT}
\begin{align}
\frac{\partial L(\cdot)}{\partial p_i} = -c_i- \alpha_i+ \lambda+\mu\nabla_p d(p_i, q_i)  = 0.\label{eq:problem1_KKT_opt1}
\end{align}
the primal feasibility conditions \eqref{eq:problem1}, the dual feasibility conditions $\alpha_i\ge 0$, $ \lambda\in\R$, $\mu\ge 0$ and finally the complementarity conditions
\begin{align}
\alpha_ip_i & =0,\quad\forall i=1,\dots,n,\label{eq:problem1_KKT_comp1} \\
\mu \left(\sum_{i=1}^n d(p_i, q_i)-\eps\right) &= 0.\label{eq:problem1_KKT_comp2}
\end{align}
\end{subequations}

Since $\mu\ge 0$, there are two possibilities.

\paragraph{Case 1 of $\mu=0$:} If $\mu=0$, then from \eqref{eq:problem1_KKT_opt1} we get $\lambda = \alpha_i + c_i$. This, together with $\alpha_i\ge0$ and $\alpha_ip_i=0$ implies that $\alpha_i=0$ for $i\in I$ and that $p_i=0$ for $i\notin I$. Then the KKT system is satisfied if there exists a feasible $\bm p$ with $p_i=0$ for $i\notin I$ such that $\sum_{i=1}^nd(p_i, q_i)\le \eps$. This problem can be verified by solving the convex problem
\begin{equation}\label{eq:check0}
\aligned
\mnmz_{\bm p}\ &\sum_{i\in I}d(p_i,q_i) \\
\st\ &\sum_{i\in I}p_i=1,\\
&0\le p_i,\quad\forall i\in I,
\endaligned
\end{equation}
and checking whether its optimal value is smaller or equal than $\eps - \sum_{i\notin I}d(0,q_i)$.

Since for $\phi$-divergences, we have $d(p_i,q_i)=q_i\phi (\frac{p_i}{q_i})$, it is not difficult to verify that the ratio $\frac{p_i}{q_i}$ is constant. This implies that $\hat{\bm p}$ defined in \eqref{eq:check1} is the solution to \eqref{eq:check0}. This finishes the proof of Theorem \ref{thm0}.

\paragraph{Case 2 of $\mu>0$:} In the opposite case we have $\mu>0$, which due to \eqref{eq:problem1_KKT_comp2} implies that the feasibility conditions change into
\begin{equation}\label{eq:problem1_KKT_feas}
\aligned
\sum_{i=1}^n p_i &= 1, \\
\sum_{i=1}^nd(p_i, q_i) &= \eps.
\endaligned
\end{equation}
We now split the proof into five parts for Theorem \ref{thm1}
-\ref{thm5}.

\begin{proof}[Proof of Theorem \ref{thm1}]
The feasibility constraint may be due to Assumption \ref{ass} written as 
$$
\sum_{i=1}^n\left(p_i\log\left(\frac{p_i}{q_i}\right) - p_i \right) = \eps-1.
$$
For now we assume that $p_i>0$ for all $i$ and remove this assumption later. This implies $\alpha_i=0$. Then the optimality condition \eqref{eq:problem1_KKT_opt1} reads
$$
-c_i + \lambda + \mu\log p_i - \mu\log q_i = 0,
$$
from which we deduce
\begin{equation}\label{eq:proof1_1}
p_i = q_i \exp\left(\frac{c_i-\lambda}{\mu}\right).
\end{equation}
Plugging \eqref{eq:proof1_1} into the feasibility conditions \eqref{eq:problem1_KKT_feas} yields
\begin{align}
\label{eq:proof1_2} \sum_{i=1}^n q_i \exp\left(\frac{c_i-\lambda}{\mu}\right) &= 1, \\
\label{eq:proof1_3} \sum_{i=1}^n q_i \exp\left(\frac{c_i-\lambda}{\mu}\right)\frac{c_i-\lambda}{\mu} &= \eps.
\end{align}
We can express $\mu$ from \eqref{eq:proof1_2} via
$$
\sum_{i=1}^nq_i\exp\left(\frac{c_i}{\mu}\right) = \exp\left(\frac{\lambda}{\mu}\right),
$$
which together with \eqref{eq:proof1_3} gives the final equation \eqref{eq:thm1_1}. The optimal probabilities \eqref{eq:thm1_2} then follow from \eqref{eq:proof1_1}.


Now we need to remove the assumption of $p_i>0$. Recall that
$$
h_1(\mu) = \sum_{i=1}^nq_i \exp\left(\frac{c_i}{\mu}\right)\left(\frac{c_i}{\mu} - \log\left(\sum_{j=1}^nq_j\exp\left(\frac{c_j}{\mu}\right)\right) - \eps\right). $$
For its middle part we have
$$
\frac{c_i}{\mu} - \log\left(\sum_{j=1}^nq_j\exp\left(\frac{c_j}{\mu}\right)\right) \le \frac{c_i}{\mu} - \log\left(\sum_{j=1}^nq_j\exp\left(\frac{\cmin}{\mu}\right)\right) = \frac{c_i}{\mu} - \frac{\cmin}{\mu} \le \frac{\cmax-\cmin}{\mu},
$$
which implies that
\begin{equation}\label{eq:proof1_4}
h_1(\mu) \le \sum_{i=1}^nq_i \exp\left(\frac{c_i}{\mu}\right)\left(\frac{\cmax-\cmin}{\mu} - \eps\right) \le 0\qquad\text{whenever}\qquad \mu\ge\frac{\cmax-\cmin}{\eps}.
\end{equation}

We consider now the limit of $h_1(\mu)$ as $\mu\downarrow 0$. Due to the properties of the exponential function, for all $\alpha>0$, there is some $\mu_0$ such that for all $\mu\in(0,\mu_0)$ we have
$$
- \log\left(\sum_{j=1}^nq_j\exp\left(\frac{c_j}{\mu}\right)\right) \ge - \log\left((1+\alpha)\sum_{j\in I}q_j\exp\left(\frac{c_j}{\mu}\right)\right) = -\log(1+\alpha) - \log\left(\sum_{j\in I}q_j\right) - \frac{\cmax}{\mu}.
$$
This implies
$$
\aligned
h_1(\mu) &\ge \sum_{i=1}^nq_i \exp\left(\frac{c_i}{\mu}\right)\left(\frac{c_i}{\mu} - \log(1+\alpha) - \log\left(\sum_{j\in I}q_j\right) - \frac{\cmax}{\mu} - \eps\right).
\endaligned
$$
The right-most term is positive and independent of $\mu$ whenever $i\in I$ and $\alpha$ is sufficiently small due to the assumptions of Theorem \ref{thm1}. Moreover as
$$
\exp\left(\frac{\cmax}{\mu}\right) \gg \exp\left(\frac{c_i}{\mu}\right)\frac{1}{\mu}
$$
for all $i\notin I$, we deduce that $h_1(\mu)\to\infty$ as $\mu\downarrow 0$. This combined with \eqref{eq:proof1_4} and the continuity of $h_1$ implies that the equation $h_1(\mu)=0$ has a solution on $(0,\frac{\cmax-\cmin}{\eps}]$. Since the optimality conditions are equivalent to problem \eqref{eq:problem1} due to convexity, the existence of solution also implies that the assumption of $p_i>0$ may be alleviated.
\end{proof}

\begin{proof}[Proof of Theorem \ref{thm2}]
The form of $\phi_2$ implies $p_i>0$ for all $i$, which further means $\alpha_i=0$. Then the optimality condition \eqref{eq:problem1_KKT_opt1} reads
$$
-c_i + \lambda - \mu\frac{q_i}{p_i} = 0,
$$
from which we deduce
\begin{equation}\label{eq:proof2_1}
p_i = q_i \frac{\mu}{\lambda-c_i}.
\end{equation}
Plugging \eqref{eq:proof2_1} into the feasibility conditions \eqref{eq:problem1_KKT_feas} yields
\begin{align}
\label{eq:proof2_2} \sum_{i=1}^nq_i\frac{\mu}{\lambda-c_i} &= 1, \\
\label{eq:proof2_3} \sum_{i=1}^n q_i\log\left(\lambda-c_i\right) &= \eps + \log\mu.
\end{align}
We can express $\mu$ from \eqref{eq:proof2_2} via
$$
\sum_{i=1}^n\frac{q_i}{\lambda-c_i} = \frac{1}{\mu},
$$
which together with \eqref{eq:proof2_3} gives the final equation \eqref{eq:thm2_1}. The optimal probabilities \eqref{eq:thm2_2} then follow from \eqref{eq:proof2_1}. The constraint $\mu>0$ transfers to $\lambda>\cmax$ due to \eqref{eq:proof2_1}.


Now we are interested in the limits. Recall that
$$
h_2(\lambda) = \sum_{i=1}^n q_i\log\left(\lambda-c_i\right) + \log\left(\sum_{i=1}^n\frac{q_i}{\lambda-c_i}\right) - \eps.
$$
Then we have
$$
\aligned
h_2(\lambda )&\le \sum_{i=1}^n q_i\log\left(\lambda-\cmin\right) + \log\left(\sum_{i=1}^n\frac{q_i}{\lambda-\cmax}\right) - \eps = \log\left(\lambda-\cmin\right) - \log\left(\lambda-\cmax\right) - \eps \\
&= \log\left(\frac{\lambda-\cmin}{\lambda-\cmax}\right) - \eps = \log\left(1+\frac{\cmax-\cmin}{\lambda-\cmax}\right) - \eps \le \frac{\cmax-\cmin}{\lambda-\cmax} - \eps  
\endaligned
$$
Thus 
\begin{equation}\label{eq:proof2_4}
h_2(\lambda)\le 0 \qquad\text{whenever}\qquad \lambda\ge \cmax+\frac{\cmax-\cmin}{\eps}.
\end{equation}

We consider now the limit of $h_2(\lambda)$ as $\lambda\downarrow \cmax$. Denoting $ \csmax$ the second largest distinct component value of $\bm c$, we have
$$
\aligned
h_2(\lambda) &\ge \sum_{i\in I} q_i\log\left(\lambda-c_i\right) + \sum_{i\notin I} q_i\log\left(\lambda-c_i\right) + \log\left(\sum_{i\in I}\frac{q_i}{\lambda-c_i}\right) - \eps \\
&\ge \sum_{i\in I} q_i\log\left(\lambda-c_i\right) + \sum_{i\notin I} q_i\log\left(\cmax- \csmax\right) + \log\left(\sum_{i\in I}\frac{q_i}{\lambda-c_i}\right) - \eps \\
&= \left(\sum_{i\in I}q_i-1\right) \log\left(\lambda-\cmax\right) + \sum_{i\notin I} q_i\log\left(\cmax- \csmax\right) + \log\left(\sum_{i\in I} q_i\right) - \eps \\
\endaligned
$$
Due to the assumptions of Theorem \ref{thm5}, we obtain $h_2(\lambda)\to\infty$ as $\lambda\downarrow\cmax$. This combined with \eqref{eq:proof2_4} and the continuity of $h_2$ implies that the equation $h_2(\lambda)=0$ has a solution on $(\cmax,\cmax+\frac{\cmax-\cmin}{\eps}]$.
\end{proof}

\begin{proof}[Proof of Theorem \ref{thm3}]
The feasibility constraint may due to Assumption \ref{ass} be written as 
$$
-2\sum_{i=1}^n\sqrt{p_iq_i} = \eps - 2.
$$
For now we assume that $p_i>0$ for all $i$ and remove this assumption later. This implies $\alpha_i=0$. Then the optimality condition \eqref{eq:problem1_KKT_opt1} reads
\begin{equation}\label{eq:proof3_0}
-c_i + \lambda - \mu \sqrt{\frac{q_i}{p_i}} = 0.
\end{equation}
from which we deduce
\begin{equation}\label{eq:proof3_1}
p_i = q_i \frac{\mu^2}{(\lambda-c_i)^2}.
\end{equation}
Plugging \eqref{eq:proof3_1} into the feasibility conditions \eqref{eq:problem1_KKT_feas} yields
\begin{align}
\label{eq:proof3_2} \sum_{i=1}^n q_i \frac{\mu^2}{(\lambda-c_i)^2} &= 1, \\
\label{eq:proof3_3} 2\sum_{i=1}^nq_i \frac{\mu}{\nrm{\lambda-c_i}} &= 2-\eps.
\end{align}
We can express $\mu$ from \eqref{eq:proof3_2} via
$$
\sum_{i=1}^n \frac{q_i}{(\lambda-c_i)^2} = \frac{1}{\mu^2},
$$
which together with \eqref{eq:proof3_3} gives the final equation \eqref{eq:thm3_1}. The optimal probabilities \eqref{eq:thm3_2} then follow from \eqref{eq:proof3_1}. The constraint $\mu>0$ transfers to $\lambda>\cmax$ due to \eqref{eq:proof3_0}, which also allows us to remove the absolute value from \eqref{eq:proof3_3}.


Now we need to remove the assumption of $p_i>0$. Recall that
$$
h_3(\lambda) = 2\sum_{i=1}^n \frac{q_i}{\lambda-c_i} - (2-\eps)\sqrt{\sum_{i=1}^n \frac{q_i}{(\lambda-c_i)^2}}
$$
Then we have
$$
\aligned
h_3(\lambda) &\ge 2\sum_{i=1}^n \frac{q_i}{\lambda-\cmin} - (2-\eps)\sqrt{\sum_{i=1}^n \frac{q_i}{(\lambda-\cmax)^2}} = \frac{2}{\lambda-\cmin} - \frac{2-\eps}{\lambda-\cmax} \\
&= \frac{2(\lambda-\cmax) - (2-\eps)(\lambda-\cmin)}{(\lambda-\cmin) (\lambda-\cmax)} = \frac{\eps\lambda - 2(\cmax-\cmin) - \eps\cmin}{(\lambda-\cmin) (\lambda-\cmax)}
\endaligned
$$
Thus 
\begin{equation}\label{eq:proof3_4}
h_3(\lambda)\ge 0 \qquad\text{whenever}\qquad \lambda\ge \cmin+\frac{2(\cmax-\cmin)}{\eps} = \cmax + \frac{(2-\eps)(\cmax-\cmin)}{\eps}.
\end{equation}

We consider now the limit of $h_3(\lambda)$ as $\lambda\downarrow \cmax$. Denoting $ \csmax$ the second largest distinct component value of $\bm c$, we have
$$
\aligned
h_3(\lambda) &\le 2\sum_{i\in I}\frac{q_i}{\lambda-c_i} + 2\sum_{i\notin I}\frac{q_i}{\lambda-c_i}  - (2-\eps)\sqrt{\sum_{i\in I} \frac{q_i}{(\lambda-c_i)^2}} \\
&\le 2\frac{\sum_{i\in I} q_i}{\lambda-\cmax} + \frac{2}{\cmax -  \csmax}  - (2-\eps) \frac{\sqrt{\sum_{i\in I} q_i}}{\lambda-\cmax} \\
&\le \frac{2\sum_{i\in I} q_i - (2-\eps)\sqrt{\sum_{i\in I} q_i}}{\lambda-\cmax} + \frac{2}{\cmax -  \csmax}\\
\endaligned
$$
Due to the assumptions of Theorem \ref{thm3}, we obtain $h_3(\lambda)\to-\infty$ as $\lambda\downarrow\cmax$. This combined with \eqref{eq:proof3_4} and the continuity of $h_3$ implies that the equation $h_3(\lambda)=0$ has a solution on $(\cmax,\cmax+\frac{(2-\eps)(\cmax-\cmin)}{\eps}]$. Since the optimality conditions are equivalent to problem \eqref{eq:problem1} due to convexity, the existence of solution also implies that the assumption of $p_i>0$ may be alleviated.
\end{proof}

\begin{proof}[Proof of Theorem \ref{thm4}]
The form of $\phi_4$ implies $p_i>0$ for all $i$, which further means $\alpha_i=0$. Due to Assumption \ref{ass}, the feasibility constraints on distance amounts to
$$
\sum_{i=1}^n\frac{q_i^2}{p_i} = 1+\eps.
$$
Then the optimality condition \eqref{eq:problem1_KKT_opt1} reads
$$
-c_i + \lambda - \mu\frac{q_i^2}{p_i^2} = 0.
$$
from which we deduce
\begin{equation}\label{eq:proof4_1}
p_i = q_i\sqrt{\frac{\mu}{\lambda-c_i}}.
\end{equation}
Plugging \eqref{eq:proof4_1} into the feasibility conditions \eqref{eq:problem1_KKT_feas} yields
\begin{align}
\label{eq:proof4_2} \sum_{i=1}^nq_i\sqrt{\frac{\mu}{\lambda-c_i}} &= 1, \\
\label{eq:proof4_3} \sum_{i=1}^nq_i\sqrt{\frac{\lambda-c_i}{\mu}} &= 1+\eps.
\end{align}
We can express $\mu$ from \eqref{eq:proof4_3} via
$$
\frac{1}{1+\eps}\sum_{i=1}^nq_i\sqrt{\lambda-c_i} = \sqrt{\mu},
$$
which together with \eqref{eq:proof4_2} gives the final equation \eqref{eq:thm4_1}. The optimal probabilities \eqref{eq:thm4_2} then follow from \eqref{eq:proof4_1}. The constraint $\mu>0$ transfers to $\lambda>\cmax$ due to \eqref{eq:proof4_1}.
\end{proof}

\begin{proof}[Proof of Theorem \ref{thm5}]
The feasibility constraint may due to Assumption \ref{ass} be written as 
$$
\sum_{i=1}^n\frac{p_i^2}{q_i} = 1+\eps.
$$
Then the optimality condition \eqref{eq:problem1_KKT_opt1} reads (for the uniformity of results, we flipped the sign of $\lambda$)
$$
-c_i - \alpha_i - \lambda + 2\mu\frac{p_i}{q_i} = 0.
$$
from which we due to the complementarity conditions \eqref{eq:problem1_KKT_comp1} deduce
\begin{equation}\label{eq:proof5_1}
p_i = \frac{1}{2\mu}q_i \max\left(c_i+\lambda,0\right).
\end{equation}
Plugging \eqref{eq:proof5_1} into the feasibility conditions \eqref{eq:problem1_KKT_feas} yields
\begin{align}
\label{eq:proof5_2} \frac{1}{2\mu}\sum_{i=1}^nq_i \max\left(c_i+\lambda,0\right) &= 1, \\
\label{eq:proof5_3} \frac{1}{4\mu^2}\sum_{i=1}^nq_i {\max}^2\left(c_i+\lambda,0\right) &= 1+\eps.
\end{align}
We can express $\mu$ from \eqref{eq:proof5_2} via
$$
\frac{1}{2}\sum_{i=1}^nq_i \max\left(c_i+\lambda,0\right) = \mu,
$$
which together with \eqref{eq:proof5_3} gives the final equation \eqref{eq:thm5_1}. The optimal probabilities \eqref{eq:thm5_2} then follow from \eqref{eq:proof5_1}. The constraint $\mu>0$ transfers to $\lambda>-\cmax$ due to \eqref{eq:proof5_1}.
\end{proof}

\subsection{Optimality conditions for Theorem \ref{thm6}}

To prove Theorem \ref{thm6}, we first realize that the first part of Section \ref{sec:proof1} including the first paragraph of ``Case 1 of $\mu=0$'' holds true with $d(p_i,q_i)=\frac12(p_i-q_i)^2$. Moreover, problem \eqref{eq:check0} takes the form
\begin{equation}\label{eq:check00}
\aligned
\mnmz_{\bm p}\ &\frac12\sum_{i\in I}(p_i-q_i)^2 \\
\st\ &\sum_{i\in I}p_i=1,\\
&0\le p_i,\quad\forall i\in I.
\endaligned
\end{equation}
Since $q_i>0$ and $\sum_{i\in I}q_i<1$ due to Assumption \ref{ass}, it is not difficult to verify that $\hat{\bm p}$ defined in \eqref{eq:check3} is the optimal solution of \eqref{eq:check00}. This proves the first part of Theorem \ref{thm6}.

For the second part, we realize that the feasibility constraint may be written as 
$$
\frac12\sum_{i=1}^n\left(p_i - q_i\right)^2 = \frac12\eps^2.
$$
Then the optimality condition \eqref{eq:problem1_KKT_opt1} reads
$$
-c_i- \alpha_i+ \lambda+\mu(p_i-q_i) = 0,
$$
from which we due to the primal feasibility condition $p_i\ge0$, the dual feasibility conditions $\alpha_i\ge 0$ and the complementarity condition \eqref{eq:problem1_KKT_comp1} deduce
\begin{equation}\label{eq:proof6_1}
p_i = \max\left(q_i - \frac{1}{\mu}(\lambda-c_i), 0\right).
\end{equation}
Plugging \eqref{eq:proof6_1} into the feasibility conditions \eqref{eq:problem1_KKT_feas} yields
\begin{align}
\label{eq:proof6_2} \sum_{i=1}^n\max\left(q_i - \frac{1}{\mu}(\lambda-c_i), 0\right) &= 1, \\
\label{eq:proof6_3} \sum_{i=1}^n\left( \max\left(q_i - \frac{1}{\mu}(\lambda-c_i), 0\right) - q_i \right)^2&= \eps^2.
\end{align}
The final equations \eqref{eq:thm6_1} are obtained from \eqref{eq:proof6_2} and \eqref{eq:proof6_3} by simple calculus, the formula $\max(-x,-y)=-\min(x,y)$ and Assumption \ref{ass}.

\subsection{Convexity for Proposition \ref{prop:convex}}

We first show an auxiliary result which states the continuity on $h_6$.

\begin{lemma}\label{lemma:g6}
For each $\mu>0$ there is a unique $\lambda(\mu)$ which solves $g_6(\lambda;\mu)$. Moreover, function $h_6(\mu)$ is continuous on $(0,\infty)$.
\end{lemma}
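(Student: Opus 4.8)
The plan is to treat $g_6(\cdot;\mu)$ as a one-dimensional function of $\lambda$ for a fixed $\mu>0$ and exploit its piecewise-linear, monotone structure. First I would observe that each summand $\min(\lambda-c_i,\mu q_i)$ is continuous in $\lambda$, equals $\lambda-c_i$ (slope $1$) for $\lambda<c_i+\mu q_i$, and saturates to the constant $\mu q_i$ once $\lambda\ge c_i+\mu q_i$. Hence $g_6(\cdot;\mu)$ is continuous, non-decreasing and piecewise linear, with $g_6(\lambda;\mu)\to-\infty$ as $\lambda\to-\infty$ and $g_6(\lambda;\mu)\to\mu\sum_{i=1}^n q_i=\mu$ as $\lambda\to+\infty$, the last equality using Assumption (A1).

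For existence and uniqueness I would introduce the saturation threshold $\lambda^\star:=\max_{i}(c_i+\mu q_i)$. For every $\lambda<\lambda^\star$ the maximizing index satisfies $\lambda-c_i<\mu q_i$, so it contributes a slope-$1$ term and $g_6(\cdot;\mu)$ is \emph{strictly} increasing on $(-\infty,\lambda^\star)$, rising from $-\infty$ to the value $g_6(\lambda^\star;\mu)=\mu$. Since $\mu>0$, the intermediate value theorem then yields a unique zero $\lambda(\mu)$ in this interval. On $[\lambda^\star,\infty)$ the function is identically equal to $\mu>0$ and so has no zero, which establishes global uniqueness and, as a useful byproduct, shows that $\lambda(\mu)<\lambda^\star$ always lies in the region of strict monotonicity.

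For continuity of $\lambda(\mu)$ I would use the joint continuity of $(\lambda,\mu)\mapsto g_6(\lambda;\mu)$ (a finite sum of minima of jointly continuous functions) together with the strict crossing just established. Fixing $\mu_0$ and setting $\lambda_0:=\lambda(\mu_0)$, for any small $\delta>0$ strict monotonicity near $\lambda_0$ gives $g_6(\lambda_0-\delta;\mu_0)<0<g_6(\lambda_0+\delta;\mu_0)$. By joint continuity these two strict inequalities persist for all $\mu$ in some neighborhood of $\mu_0$, and since $g_6(\cdot;\mu)$ is non-decreasing its unique zero $\lambda(\mu)$ must then lie in $(\lambda_0-\delta,\lambda_0+\delta)$; this is precisely continuity of $\lambda(\cdot)$ at $\mu_0$. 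Finally, $h_6(\mu)=\sum_{i=1}^n{\min}^2(\lambda(\mu)-c_i,\mu q_i)-\eps^2\mu^2$ is a composition and sum of continuous maps, hence continuous on $(0,\infty)$.

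The main obstacle I anticipate is the uniqueness step rather than the continuity. Because $g_6$ is only non-decreasing (it becomes flat once all terms saturate), one cannot invoke strict monotonicity globally; the crucial point is to locate the plateau at the strictly positive level $\mu$ — which is exactly where Assumption (A1) and $\mu>0$ enter — so that the zero is forced into the interval $(-\infty,\lambda^\star)$ on which the slope is at least $1$. Once this is pinned down, the remaining arguments are routine.
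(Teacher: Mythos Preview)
Your proof is correct and rests on the same key observation as the paper's: at any zero of $g_6(\cdot;\mu)$ not all terms can be saturated (otherwise the value would equal $\mu>0$), so $g_6$ is strictly increasing there and the zero is unique. The only cosmetic difference is that you establish continuity of $\lambda(\mu)$ via a sign-preservation/joint-continuity argument, whereas the paper uses the equivalent sequential-compactness route (take $\mu_k\to\mu$, extract a convergent subsequence of the bounded $\lambda(\mu_k)$, and identify the limit by uniqueness); your treatment is somewhat more explicit about existence, which the paper leaves implicit.
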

\begin{proof}
Fix any $\mu>0$ and recall that
$$
g_6 (\lambda;\mu) = \sum_{i=1}^n\min\left(\lambda-c_i, \mu q_i\right).
$$
Since $\mu>0$ for at least one $i$ we have $\lambda(\mu)-c_i<\mu q_i$ which implies that $g_6$ is strictly increasing in $\lambda$ around $\lambda(\mu)$. Thus, the solution $\lambda(\mu)$ is unique.

Consider now any $\mu_k\to \mu>0$. Since the corresponding $\lambda(\mu_k)$ are bounded, we may select a converging subsequence, say $\lambda(\mu_k)\to\lambda^*$. Then
$$
0 = \sum_{i=1}^n\min(\lambda(\mu_k)-c_i,\mu_kq_i) \to \sum_{i=1}^n\min(\lambda^*-c_i,\mu q_i),
$$
which implies that $\lambda^*=\lambda(\mu)$. Thus, $\lambda$ is a continuous function of $\mu$. This further implies that $h_6$ is a continuous function of $\mu$. 
\end{proof}

Concerning the proof of Proposition \ref{prop:convex}, we divide it into three parts.

\paragraph{Proof for $h_4$:}

After rearranging of terms, we get
\begin{equation}\label{eq:h4_rearrange}
h_4(\lambda) = \sum_{i=1}^n\sum_{j=1}^nq_iq_j g_{ij}(\lambda) - 1 - \eps,
\end{equation}
where for $i,j=1,\dots,n$ we define
$$
g_{ij}(\lambda) := (\lambda-c_i)^{\frac12} (\lambda-c_j)^{-\frac12}.
$$
Computing the first derivative
$$
\aligned
g_{ij}'(\lambda) &= \frac12(\lambda-c_i)^{-\frac12} (\lambda-c_j)^{-\frac12} - \frac12(\lambda-c_i)^{\frac12} (\lambda-c_j)^{-\frac32} \\
&= \frac12(\lambda-c_i)^{-\frac12} (\lambda-c_j)^{-\frac32}(\lambda - c_j - (\lambda - c_i)) \\
&= \frac12(\lambda-c_i)^{-\frac12} (\lambda-c_j)^{-\frac32}(c_i - c_j)
\endaligned
$$
and the second derivative
$$
\aligned
g_{ij}''(\lambda) &= \frac12(c_i - c_j)\left(-\frac12(\lambda-c_i)^{-\frac32} (\lambda-c_j)^{-\frac32} - \frac32(\lambda-c_i)^{-\frac12} (\lambda-c_j)^{-\frac52} \right) \\
&= -\frac14(c_i - c_j)\left((\lambda-c_i)^{-\frac32} (\lambda-c_j)^{-\frac32} + 3(\lambda-c_i)^{-\frac12} (\lambda-c_j)^{-\frac52} \right) \\
&= -\frac14(c_i - c_j)(\lambda-c_i)^{-\frac32}(\lambda-c_j)^{-\frac52}( \lambda-c_j + 3(\lambda-c_i) ) \\
&= -\frac14(c_i - c_j)(\lambda-c_i)^{-\frac32}(\lambda-c_j)^{-\frac52}( 4\lambda -3c_i - c_j).
\endaligned
$$
we realize that
$$
\aligned
g_{ij}''(\lambda) + g_{ji}''(\lambda) &= -\frac14(c_i - c_j)(\lambda-c_i)^{-\frac52}(\lambda-c_j)^{-\frac52}\big( 4\lambda -3c_i - c_j)(\lambda-c_i) - (4\lambda -c_i - 3c_j)(\lambda-c_j)\big) \\
&= -\frac14(c_i - c_j)(\lambda-c_i)^{-\frac52}(\lambda-c_j)^{-\frac52}\big( 3(c_j-c_i)(2\lambda-c_i-c_j)\big) \\
&= \frac34(c_i - c_j)^2(\lambda-c_i)^{-\frac52}(\lambda-c_j)^{-\frac52} (2\lambda-c_i-c_j) \ge 0.
\endaligned
$$
This together with \eqref{eq:h4_rearrange} implies that $h_4$ is convex.

\paragraph{Proof for $h_5$:}

Recall that
\begin{equation}\label{eq:h5_1}
h_5(\lambda) = \sum_{i=1}^nq_i {\max}^2\left(c_i+\lambda,0\right) - (1+\eps)\left(\sum_{i=1}^nq_i \max\left(c_i+\lambda,0\right)\right)^2
\end{equation}
For simplicity assume that $-c_1<\dots<-c_n$. Then on $\lambda\in(-c_j,-c_{j+1})$ we have
$$
h_5(\lambda) = \sum_{i=1}^jq_i (c_i+\lambda)^2 - (1+\eps)\left(\sum_{i=1}^jq_i (c_i+\lambda)\right)^2
$$
The derivative at this interval equals to
\begin{equation}\label{eq:h5_2}
\aligned
h_5'(\lambda) &= 2\sum_{i=1}^jq_i (c_i+\lambda) - 2(1+\eps)\left(\sum_{i=1}^jq_i (c_i+\lambda)\right)\sum_{i=1}^jq_i \\
&= 2\sum_{i=1}^jq_i (c_i+\lambda)\left(1 - (1+\eps)\sum_{i=1}^jq_i\right) \\
&= 2\left(\sum_{i=1}^nq_i \max\left(c_i+\lambda,0\right)\right) \left(1 - (1+\eps)\sum_{i=1}^jq_i\right).
\endaligned
\end{equation}

Due to the assumption of $-c_1<\dots<-c_n$, the violation of \eqref{eq:check2} implies $(1+\eps)q_1<1$. This due to \eqref{eq:h5_2} means that $h_4'(\lambda) > 0$ on $\lambda\in(-c_1,-c_2)$. Since $h_5(-c_1)=0$ due to \eqref{eq:h5_1}, this implies that $h_5$ is positive on $(-c_1,-c_2)$. Moreover, $h_5'$ equals to a product of two terms, the first one is always positive while the second one is piecewise constant with decreasing values on individual pieces. This implies that there is some $\lambda_0$ such that $h_5$ is non-decreasing on $(-\cmax,\lambda_0)$ and decreasing on $(\lambda_0,\infty)$. By the same arguments we have that $h_5'$ is decreasing on $(\lambda_0,\infty)$, which implies that $h_5$ is concave on this interval. This finishes the proof of the second part.

If $-c_1\le\dots\le-c_n$ instead of the assumed $-c_1<\dots<-c_n$, then the proof can be performed in exactly the same way but $(1+\eps)q_1<1$ changes to $(1+\eps)\sum_{i=1}^jq_i<1$, where $j$ is the cardinaly of $I$. Then we can get the same estimates of \eqref{eq:h5_2} as in the previous paragraphs.

\paragraph{Proof for $h_6$:}

Define
\begin{equation}\label{eq:thm6_I}
I(\mu) = \{i\mid \lambda(\mu)-c_i < \mu q_i\}
\end{equation}
and consider any $0<\mu_1<\mu_2$. Since $\lambda(\mu_1)$ solves \eqref{eq:thm6_12} for $\mu=\mu_1$ and $\lambda(\mu_2)$ solves the same equation for $\mu=\mu_2$, we obtain $\lambda(\mu_1)\ge \lambda(\mu_2)$. This due to \eqref{eq:thm6_I} implies $I(\mu_1)\subset I(\mu_2)$. Thus, $I(\mu)$ is a non-decreasing function (with respect to set inclusion) of $\mu$.

From \eqref{eq:thm6_12} we have
$$
\aligned
0 &= \sum_{i=1}^n\min\left(\lambda(\mu)-c_i, \mu q_i\right)  = \sum_{i\in I(\mu)}(\lambda(\mu)-c_i) + \sum_{i\notin I(\mu)} \mu q_i \\
&= \nrm{I(\mu)}\lambda(\mu) -\sum_{i\in I(\mu)}c_i + \mu \sum_{i\notin I(\mu)} q_i,
\endaligned
$$
from which we deduce
\begin{equation}\label{eq:problem1_proof1}
\lambda(\mu) = \frac{1}{\nrm{I(\mu)}}\left(\sum_{i\in I(\mu)} c_i - \mu\sum_{i\notin I(\mu)}q_i\right).
\end{equation}
Since $I(\mu)$ is a non-decreasing function of $\mu$, this implies that $\lambda(\mu)$ is a piecewise linear function with a finite number of pieces. On each of these pieces, we have
\begin{equation}\label{eq:problem1_proof2}
\lambda'(\mu) = -\frac{1}{\nrm{I(\mu)}}\sum_{i\notin I(\mu)}q_i
\end{equation}
and consequently
$$
h_6(\mu) = \sum_{i\in I(\mu)}(\lambda(\mu)-c_i)^2 + \sum_{i\notin I(\mu)} \mu^2q_i^2 - \eps^2\mu^2.
$$
Differentiating this relation yields
\begin{equation}\label{eq:thm6_der}
\aligned
\frac12 h_6'(\mu) &= \lambda'(\mu)\sum_{i\in I(\mu)}(\lambda(\mu)-c_i) + \mu\sum_{i\notin I(\mu)} q_i^2 - \mu\eps^2 \\
&= -\lambda'(\mu) \sum_{i\in I(\mu)}c_i + \lambda'(\mu) \nrm{I(\mu)} \lambda(\mu) + \mu\sum_{i\notin I(\mu)} q_i^2 - \mu \eps^2 \\
&= -\lambda'(\mu) \sum_{i\in I(\mu)}c_i + \lambda'(\mu) \left(\sum_{i\in I(\mu)} c_i - \mu\sum_{i\notin I(\mu)}q_i\right) + \mu\sum_{i\notin I(\mu)} q_i^2 - \mu \eps^2 \\
&= - \mu\lambda'(\mu) \sum_{i\notin I(\mu)}q_i + \mu\sum_{i\notin I(\mu)} q_i^2 - \mu \eps^2 \\
&= \mu\left(\frac{1}{\nrm{I(\mu)}}\sum_{i\notin I(\mu)}q_i\sum_{i\notin I(\mu)}q_i + \sum_{i\notin I(\mu)} q_i^2 - \eps^2\right),
\endaligned
\end{equation}
where in the third equality we used \eqref{eq:problem1_proof1} and in the last one \eqref{eq:problem1_proof2}. 

Denote $ \csmax$ the second largest distinct component value of $\bm c$ and define $\hat\lambda = \frac12(\cmax +  \csmax)$. Fix any $\mu$ sufficiently small but positive. Then we have
$$
\aligned
g_6(\hat\lambda,\mu) &= \sum_{i=1}^n\min\left(\hat \lambda -c_i, \mu q_i\right) = \sum_{i\in I}\min\left(\hat \lambda -c_i, \mu q_i\right) + \sum_{i\notin I}\min\left(\hat \lambda -c_i, \mu q_i\right) \\
&= \frac12\nrm{I}( \csmax - \cmax) + \sum_{i\notin I}\min\left(\hat \lambda -c_i, \mu q_i\right) \le \frac12\nrm{I}( \csmax - \cmax) + \mu\sum_{i\notin I}q_i < 0
\endaligned
$$
whenever $\mu$ is sufficiently small. Since $g_6(\lambda(\mu);\mu)=0$ due to definition and since $g_6$ is non-decreasing in $\lambda$, this means that $\lambda(\mu)\ge \hat\lambda = \frac12(\cmax +  \csmax)$. This due to \eqref{eq:thm6_I} implies $I(\mu)=I$ whenever $\mu$ is sufficiently small. It is possible to show that the violation of \eqref{eq:check4} is equivalent to
$$
\frac{1}{\nrm{I}}\sum_{i\notin I}q_i\sum_{i\notin I}q_i + \sum_{i\notin I} q_i^2 - \eps^2 > 0.
$$
Combining this with \eqref{eq:thm6_der} and $I(\mu)=I$, this implies that $h_6$ is strictly increasing on some interval $(0,\mu_1)$. Moreover, since $\lambda(0)=\cmax$, we have $I(0)=\emptyset$ and thus $h_6(0)=0$. This implies that $h_6$ is positive on $(0,\mu_1)$. 

Moreover, $h_6'$ equals to a product of two terms, the first one is always positive while the second one is piecewise constant with decreasing values on individual pieces. This implies that there is some $\mu_0$ such that $h_6$ is non-decreasing on $(0,\mu_0)$ and decreasing on $(\mu_0,\infty)$. By the same arguments, we have that $h_6'$ is decreasing on $(\mu_0,\infty)$, which implies that $h_6$ is concave on this interval. This finishes the proof of the second part.

\section{Convergence of Newton's method}

We show the following theorem only for differentiable functions. However, it holds for any concave function by replacing the derivative by its concave superdifferential.

\begin{lemma}\label{lemma:newton}
Consider a continuous concave function $h:[a,b]\to\R$ with $h(a)>0$ and $h(b)<0$. Then the Newton's method
$$
\lambda^{k+1} = \lambda^k - \frac{h(\lambda^k)}{h'(\lambda^k)}
$$
started from any point $\lambda^0$ with $h(\lambda^0)<0$ gives a decreasing sequence which converges to some $\bar \lambda\in(a,b)$ with $h(\bar \lambda)=0$.
\end{lemma}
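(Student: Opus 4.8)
The plan is to exploit the one geometric feature of concave functions that powers Newton's method here: the graph of $h$ lies below each of its tangent lines, i.e. $h(\lambda)\le h(\lambda^k)+h'(\lambda^k)(\lambda-\lambda^k)$ for every $\lambda$. First I would pin down the target root. Since $h$ is concave, its superlevel set $\{\lambda : h(\lambda)\ge 0\}$ is an interval; as $h(a)>0$ and $h(b)<0$, continuity forces this set to be $[a,\bar\lambda]$ for a unique $\bar\lambda\in(a,b)$ with $h(\bar\lambda)=0$, and the chord joining $(a,h(a))$ and $(\bar\lambda,0)$ shows $h>0$ on $[a,\bar\lambda)$ while $h<0$ on $(\bar\lambda,b]$. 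In particular $h(\lambda^0)<0$ forces $\lambda^0>\bar\lambda$.

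Next I would show the iteration is well defined and stays to the right of $\bar\lambda$. Evaluating the tangent inequality at $\lambda=\bar\lambda$ gives $0=h(\bar\lambda)\le h(\lambda^k)+h'(\lambda^k)(\bar\lambda-\lambda^k)$. Whenever $\lambda^k>\bar\lambda$ and $h(\lambda^k)<0$, the negativity of $h(\lambda^k)$ together with $\bar\lambda-\lambda^k<0$ forces $h'(\lambda^k)<0$, so the Newton step never divides by zero. Rearranging the same inequality (dividing by $h'(\lambda^k)<0$) yields $-h(\lambda^k)/h'(\lambda^k)\ge\bar\lambda-\lambda^k$, i.e. $\lambda^{k+1}\ge\bar\lambda$; and since $h(\lambda^k)/h'(\lambda^k)>0$ we also get $\lambda^{k+1}<\lambda^k$. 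By induction every iterate lies in $(\bar\lambda,\lambda^0]$ (unless some $\lambda^{k}=\bar\lambda$, in which case $h(\lambda^k)=0$ and we are finished), so the sequence is strictly decreasing and bounded below by $\bar\lambda$.

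Finally I would identify the limit. A decreasing sequence bounded below converges to some $\lambda^*$, and because the tail satisfies $\lambda^*\le\lambda^1<\lambda^0\le b$ and $\lambda^*\ge\bar\lambda>a$, the limit is interior to $(a,b)$. Since a differentiable concave function has a continuous derivative on the interior, I can pass to the limit in $\lambda^{k+1}=\lambda^k-h(\lambda^k)/h'(\lambda^k)$ to obtain $\lambda^*=\lambda^*-h(\lambda^*)/h'(\lambda^*)$, whence $h(\lambda^*)=0$ and therefore $\lambda^*=\bar\lambda$ by uniqueness of the root. Equivalently, if $\lambda^*>\bar\lambda$ then $h(\lambda^*)<0$ and $h'(\lambda^*)<0$, so the Newton map applied at $\lambda^*$ would move strictly left, contradicting that $\lambda^*$ is the limit of the iterates.

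I expect the delicate point to be not the monotonicity itself but guaranteeing that the iterates remain in the region $\{\lambda>\bar\lambda\}$ where $h'<0$, so that the method neither stalls (division by zero) nor overshoots below the root; the supporting-tangent inequality evaluated at $\bar\lambda$ is exactly what secures both the sign of $h'(\lambda^k)$ and the lower bound $\lambda^{k+1}\ge\bar\lambda$. For the non-differentiable case flagged in the remark, every step above goes through verbatim after replacing $h'(\lambda^k)$ by an arbitrary element of the concave superdifferential $\partial h(\lambda^k)$, using that all such elements are negative on $(\bar\lambda,b]$.
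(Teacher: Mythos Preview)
Your proof is correct and follows essentially the same route as the paper: establish the unique root $\bar\lambda$ from concavity, show the Newton iterates form a decreasing sequence bounded below by $\bar\lambda$, and then identify the limit as $\bar\lambda$. The only cosmetic differences are that you justify $h'(\lambda^k)<0$ and $\lambda^{k+1}\ge\bar\lambda$ via the supporting tangent inequality while the paper simply invokes the monotonicity $h'(\lambda^k)\le h'(\bar\lambda)<0$ of a concave derivative, and that you pass to the limit using continuity of $h'$ whereas the paper uses the uniform bound $h'(\lambda^k)<h'(\bar\lambda)<0$ to force $h(\lambda^k)\to 0$; both closures are equally valid.
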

\begin{proof}
Due to concavity of $h$ and $h(a)>0$ with $h(b)<0$, there exists unique $\lambda^*\in(a,b)$ with $h(\lambda^*)=0$. Moreover, due to concavity again we have $h'(\lambda^*)<0$. Since $h$ is concave and since $h(a)>0$ and $h(\lambda^0)<0$, we obtain $\lambda^0>\lambda^*$. Then the Newton's method forms a decreasing sequence $\{\lambda^k\}$ bounded below by $\lambda^*$, which is therefore convergent. At the same time, $h'(\lambda^k)$ is uniformly bounded above by zero as $h'(\lambda^k)<h'(\lambda^*)<0$. This implies
$$
\frac{h(\lambda^k)}{h'(\lambda^k)} = \lambda^{k} - \lambda^{k+1} \to 0 ,
$$
which due to the uniform boundedness of $h'(\lambda^k)$ from zero implies $h(\lambda^k)\to 0$. But this due to the continuity of $h$ means that $\lambda^k\to\bar\lambda$.
\end{proof}

\bibliographystyle{abbrv}
\bibliography{References}

\end{document}